\newcommand*{\MRref}[2]{ \href{http://www.ams.org/mathscinet-getitem?mr=#1}{MR \textbf{#1}}}
\newcommand*{\arxiv}[1]{\href{http://www.arxiv.org/abs/#1}{arXiv: #1}}
\renewcommand{\PrintDOI}[1]{\href{http://dx.doi.org/\detokenize{#1}}{doi: \detokenize{#1}}}
\numberwithin{equation}{section}
\theoremstyle{plain}
\newtheorem{theorem}[equation]{Theorem}
\newtheorem{lemma}[equation]{Lemma}
\newtheorem{proposition}[equation]{Proposition}
\theoremstyle{definition}
\newtheorem{definition}[equation]{Definition}
\theoremstyle{remark}
\newtheorem{remark}[equation]{Remark}
\newtheorem{example}[equation]{Example}
\DeclareMathOperator{\GrdS}{\textsf{\textup{Gr}}}
\DeclareMathOperator{\Bisec}{\textsf{\textup{Bis}}}
\DeclareMathOperator{\Prim}{Prim}
\newcommand*{\Zak}{\curvearrowright}
\newcommand*{\nb}{\nobreakdash}
\newcommand*{\Open}{\mathbb O}
\newcommand*{\OpenG}{\mathcal O} 
\newcommand*{\Top}{\textsf {\textup{Top}}}
\newcommand*{\Forget}{\textsf {\textup{Forget}}}
\newcommand*{\Topop}{\textsf {\textup{Top}}^\textup{op}}
\newcommand*{\Semil}{\textsf {\textup{Sela}}}
\newcommand*{\Invse}{\textsf {\textup{Mon}}^*}
\newcommand*{\Grpds}{\textsf {\textup{Grd}}}
\newcommand*{\Cst}{\textup C^*}
\newcommand*{\Grd}{\mathcal G}
\newcommand*{\GrdH}{\mathcal H}
\newcommand*{\Id}{\textup{Id}}
\newcommand*{\Cat}{\mathcal C}
\newcommand*{\defeq}{\mathrel{\vcentcolon=}}
\newcommand*{\cl}[1]{\overline{#1}}
\newcommand*{\germ}[2]{[#1, #2]} 
\newcommand*{\s}{\textup s}
\renewcommand*{\r}{\textup r}
\begin{document}
\title{Inverse semigroup actions as groupoid actions}

\author{Alcides Buss}
\email{alcides@mtm.ufsc.br}
\address{Departamento de Matem\'atica\\
 Universidade Federal de Santa Catarina\\
 88.040-900 Florian\'opolis-SC\\
 Brazil}

\author{Ruy Exel}
\email{exel@mtm.ufsc.br}
\address{Departamento de Matem\'atica\\
 Universidade Federal de Santa Catarina\\
 88.040-900 Florian\'opolis-SC\\
 Brazil}

\author{Ralf Meyer}
\email{rmeyer2@uni-goettingen.de}
\address{Mathematisches Institut and Courant Research Centre ``Higher Order Structures''\\
 Georg-August-Universit\"at G\"ottingen\\
 Bunsenstra\ss e 3--5\\
 37073 G\"ottingen\\
 Germany}

\begin{abstract}
  To an inverse semigroup, we associate an étale groupoid such that its actions on topological spaces are equivalent to actions of the inverse semigroup.  Both the object and the arrow space of this groupoid are non-Hausdorff.  We show that this construction provides an adjoint functor to the functor that maps a groupoid to its inverse semigroup of bisections, where we turn étale groupoids into a category using algebraic morphisms.  We also discuss how to recover a groupoid from this inverse semigroup.
\end{abstract}
\subjclass[2000]{20M18, 18B40, 46L55}
\thanks{Supported by the German Research Foundation (Deutsche Forschungsgemeinschaft (DFG)) through the Institutional Strategy of the University of G\"ottingen and the grant ``Actions of \(2\)\nb-groupoids on C*\nb-algebras.''  Supported by CNPq and MATH-AmSud project U11MATH-05.}
\maketitle

\section{Introduction}
\label{sec:introduction}

Étale topological groupoids are closely related to actions of inverse semigroups on topological spaces by partial homeomorphisms (see \cites{Exel:Inverse_combinatorial, Matsnev-Resende:Etale_Groupoids, Paterson:Groupoids}).  This relationship is used, in particular, to study actions of étale topological groupoids on \(\Cst\)\nb-algebras and their crossed products.  In order to construct an étale topological groupoid out of an inverse semigroup, we first need it to act on some topological space.  We propose and study a particularly natural action of a given inverse semigroup.

Namely, given an inverse semigroup~\(S\) with idempotent semilattice \(E\subseteq S\), we consider the canonical action of~\(S\) on the character space~\(\hat{E}\) of~\(E\), equipped with a certain canonical \emph{non-Hausdorff} topology.  With a different, Hausdorff topology, this action of~\(S\) has already been studied in \cites{Exel:Inverse_combinatorial,Paterson:Groupoids}.  Our non-Hausdorff topology has the following crucial feature: if \(\GrdS(S)\) is the étale groupoid of germs for the action of~\(S\) on~\(\hat{E}\), then the category of actions of~\(S\) on topological spaces is equivalent to the category of actions of \(\GrdS(S)\) on topological spaces.  The action on~\(\hat{E}\) is the unique action of~\(S\) with this property, and it is a terminal object in the category of actions of~\(S\) on topological spaces.

This construction also sheds light on how to turn groupoids into a category.  The map \(S\mapsto \GrdS(S)\) is functorial, and left adjoint to the functor~\(\Bisec\) that maps an étale topological groupoid~\(\Grd\) to its inverse semigroup of bisections \(\Bisec(\Grd)\), \emph{provided} we turn étale groupoids into a category in an unusual way, using a notion of morphism due to Zakrzewski (see~\cite{Buneci-Stachura:Morphisms_groupoids}): an algebraic morphism from~\(\Grd\) to~\(\GrdH\), denoted \(\Grd\Zak \GrdH\), is an action of~\(\Grd\) on the arrow space of~\(\GrdH\) that commutes with the right translation action of~\(\GrdH\).  With this choice of category, \(\GrdS\)~is a functor from inverse semigroups to étale topological groupoids, which is left adjoint to~\(\Bisec\); that is, algebraic morphisms \(\GrdS(S)\Zak\Grd\) for a groupoid~\(\Grd\) correspond bijectively and naturally to inverse semigroup homomorphisms \(S\to \Bisec(\Grd)\).

The category of groupoids we use also has another important feature, which we discuss in Section~\ref{sec:morphisms_functors}.  Namely, an algebraic morphism \(\Grd\Zak\GrdH\) is equivalent to a functorial way to turn an \(\GrdH\)\nb-action on a topological space into a \(\Grd\)\nb-action on the same space.  Algebraic morphisms are the appropriate ones if we view groupoids as generalised groups, while functors are appropriate if we view groupoids as generalised spaces.  Groupoid \(\Cst\)\nb-algebras are functorial for algebraic morphisms, but not for continuous functors (see \cites{Buneci:Morphisms_dynamical, Buneci-Stachura:Morphisms_groupoids}); the orbit space is functorial for continuous functors, but not for algebraic morphisms.

\section{The terminal action of an inverse semigroup}
\label{sec:terminal}

The material included in this section follows ideas from lattice theory and related notions like the theory of locales and frames \cites{Johnstone:Stone_spaces, Pultr:Frames}.  However, since the idempotent semilattices of inverse semigroups are usually not lattices, we have chosen to give a self-contained exposition of the results we need.

We are going to describe a terminal object in the category of actions of an inverse semigroup~\(S\) on topological spaces.  This terminal action lives on a quasi-compact, sober, but non-Hausdorff topological space which is naturally associated to the semilattice \(E=E(S)\) of idempotents in the inverse semigroup.  As a preparation, we fix some details about inverse semigroups, their actions, and terminal objects.

Many inverse semigroups that arise in practice have a \emph{zero} and a \emph{unit}, that is, elements \(0,1\in S\) with \(0\cdot s=0=s\cdot 0\) and \(1\cdot s=s=s\cdot 1\) for all \(s\in S\).  If our inverse semigroup does not yet have them, we add a formal zero and a formal unit and extend the product by the rules above.  Thus it is no loss of generality to assume that inverse semigroups have unit and zero.  We always (and often tacitly) assume that homomorphisms preserve the unit and zero.

\begin{definition}
  A \emph{partial homeomorphism} of a topological space~\(X\) is a homeomorphism \(U\to V\) between open subsets \(U\) and~\(V\) of~\(X\).  These form an inverse semigroup with unit and zero with respect to the composition of partial maps.  The unit is the identity map on~\(X\), and the zero is the empty map.
\end{definition}

\begin{definition}
  \label{def:inverse_semigroup_act}
  An \emph{action} of an inverse semigroup~\(S\) on a topological space~\(X\) is a unit- and zero-preserving homomorphism from~\(S\) to the inverse semigroup of partial homeomorphisms of~\(X\).  We also write \(s\cdot x\) for the action of \(s\in S\) on an element~\(x\) in the domain of the partial homeomorphism associated to~\(s\).
\end{definition}

\begin{definition}
  Let \(X\) and~\(Y\) be topological spaces with actions of~\(S\).  A map \(f\colon X\to Y\) is called \emph{\(S\)\nb-equivariant} if, for \(s\in S\) and \(x\in X\), \(s\cdot x\) is defined if and only if \(s\cdot f(x)\) is defined, and then \(f(s\cdot x) = s\cdot f(x)\).
\end{definition}

Actions of~\(S\) on topological spaces with \(S\)\nb-equivariant continuous maps form a category, denoted~\(\Top^S\).

\begin{definition}
  \label{def:terminal}
  A \emph{terminal object} in a category is an object that admits a unique map from each object in the category.
\end{definition}

Such a terminal object is unique up to isomorphism if it exists.

\begin{example}
  The terminal object in an additive category is the zero object.
\end{example}

\begin{example}
  In the category of group actions, the terminal object is the trivial action on the one-point space.
\end{example}

\begin{example}
  The terminal object in the category of continuous actions of a topological groupoid~\(\Grd\) is more complicated: it is the object space~\(\Grd^{(0)}\) of~\(\Grd\), equipped with the canonical \(\Grd\)\nb-action (the anchor map for this action is the identity map on~\(\Grd^{(0)}\), and arrows act by \(g\cdot \s(g) = \r(g)\)).

  An action of~\(\Grd\) on a topological space~\(X\) is given by an anchor map \(\varrho\colon X\to\Grd^{(0)}\) and a multiplication map \(\mu\colon \Grd\times_{\s,\varrho} X\to X\) (see Definition~\ref{def:groupoid_act}).  The anchor map \(\varrho\colon X\to \Grd^{(0)}\) is equivariant by definition because \(\varrho(g\cdot x)=\r(g)=g\cdot \varrho(x)\), and it is routine to check that it is the only equivariant map \(X\to \Grd^{(0)}\).
\end{example}

At first sight, one may hope that terminal actions of inverse semigroups are trivial as in the group case.  It turns out, however, that this is not the case: inverse semigroups behave more like groupoids than groups in this respect.  A complicated topology on the terminal action is needed because the domains of idempotents in the inverse semigroup should be as independent as possible (compare Remark~\ref{rem:Ue_independent}).

\subsection{The character space of a semilattice}
\label{sec:character_sl}

The idempotent elements in an inverse semigroup~\(S\) form a semilattice \(E(S)\).  The underlying space of the terminal action of~\(S\) depends only on \(E(S)\), so that we work with semilattices for some time.  We assume throughout that they have a unit and a zero.

\begin{definition}
  \label{def:Open}
  For a topological space~\(X\), we let \(\Open(X)\) be the semilattice of open subsets of~\(X\) with multiplication~\(\cap\).
\end{definition}

\begin{definition}
  A \emph{character} on a semilattice~\(E\) is a homomorphism to \(\{0,1\}\).  Equivalently, it is the characteristic function of a (proper) filter, that is, a subset~\(F\) of~\(E\) that satisfies
  \begin{enumerate}
  \item \(0\notin F\), \(1\in F\);
  \item \(e,f\in F\Rightarrow ef\in F\); and
  \item \(e\in F\), \(f\in E\) with \(e\le f\) implies \(f\in F\).
  \end{enumerate}
  Let~\(\hat{E}\) be the set of characters on~\(E\).
\end{definition}

\begin{definition}
  \label{def:Ue}
  For \(e\in E\), we define \(U_e\defeq \bigl\{\varphi\in\hat{E} : \varphi(e)=1\bigr\} \subseteq \hat{E}\).
  We equip~\(\hat{E}\) with the topology generated by the subsets~\(U_e\).
\end{definition}

\begin{lemma}
  The map \(U\colon E\to \Open(\hat{E})\), \(e\mapsto U_e\), is an injective homomorphism of semilattices.  The subsets~\(U_e\) for \(e\in E\) form a basis for the topology on~\(\hat{E}\).
\end{lemma}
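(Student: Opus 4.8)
The plan is to establish the three assertions — that $U$ is a semilattice homomorphism, that it is injective, and that the $U_e$ form a basis — in an order in which each rests only on the preceding ones. First I would dispatch the homomorphism property, which is immediate from the description of a character as a homomorphism to $\{0,1\}$: for $e,f\in E$ and $\varphi\in\hat{E}$ we have $\varphi\in U_{ef}$ iff $\varphi(ef)=1$ iff $\varphi(e)\varphi(f)=1$ iff $\varphi(e)=\varphi(f)=1$ iff $\varphi\in U_e\cap U_f$, so $U_{ef}=U_e\cap U_f$. Since every character satisfies $\varphi(1)=1$ and $\varphi(0)=0$, one also gets $U_1=\hat{E}$ and $U_0=\emptyset$, so $U$ preserves unit and zero as required of a homomorphism of semilattices.

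The basis claim then follows formally. The topology on~$\hat{E}$ is by definition generated by $\{U_e:e\in E\}$ as a subbasis, so a basis is furnished by the finite intersections of the $U_e$. By the homomorphism property just proved, $U_{e_1}\cap\cdots\cap U_{e_n}=U_{e_1\cdots e_n}$, and the empty intersection is $\hat{E}=U_1$; hence every finite intersection of subbasic sets is again of the form $U_g$. Thus $\{U_e:e\in E\}$ is already closed under finite intersections and is therefore a basis.

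The substantive content lies in injectivity, for which I would prove the sharper statement that $U$ reflects the order: $U_e\subseteq U_f$ if and only if $e\le f$. The forward implication uses $ef=e$: for $\varphi\in U_e$ we compute $1=\varphi(e)=\varphi(ef)=\varphi(e)\varphi(f)=\varphi(f)$, so $\varphi\in U_f$. For the reverse implication, the key is to produce enough characters, and the natural supply is the principal filter: for $e\neq 0$, the set ${\uparrow}e\defeq\{g\in E:e\le g\}$ is a proper filter, so its indicator function~$\varphi_e$ is a character with $\varphi_e(g)=1\iff e\le g$. Then $U_e\subseteq U_f$ forces $\varphi_e\in U_e\subseteq U_f$, i.e.\ $e\le f$; the degenerate case $e=0$ is trivial since $0\le f$ always. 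Combining the two implications with antisymmetry of the semilattice order gives $U_e=U_f\iff e=f$, which is injectivity.

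The only real obstacle is verifying that ${\uparrow}e$ satisfies the filter axioms — that is, that there genuinely are enough characters to separate the points of~$E$. This amounts to checking that $0\notin{\uparrow}e$ (which is exactly where $e\neq 0$ enters, since $e\le 0$ would mean $e=e\cdot 0=0$), that $1\in{\uparrow}e$, that ${\uparrow}e$ is closed under products (from $e\le g$ and $e\le h$ one gets $e\cdot gh=(eg)h=eh=e$, so $e\le gh$), and that it is upward closed (transitivity of~$\le$). Once this is in hand the rest of the argument is purely formal.
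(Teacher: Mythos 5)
Your proof is correct and takes essentially the same route as the paper: the homomorphism and basis claims are the direct computations \(U_0=\emptyset\), \(U_1=\hat{E}\), \(U_e\cap U_f=U_{ef}\), and injectivity rests on the same principal-filter character \(\varphi_e(f)=[e\le f]\) of~\eqref{eq:special_character}. If anything you are slightly more careful than the paper, which neither spells out the order-reflection formulation nor comments on the degenerate case \(e=0\), where \(\varphi_e\) fails to be a (proper) character.
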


\begin{proof}
  Obviously, \(U_0=\emptyset\), \(U_1=\hat{E}\), and \(U_e\cap U_f = U_{e\cdot f}\), so that~\(U\) is a homomorphism.  It is injective because
  \begin{equation}
    \label{eq:special_character}
    \varphi_e(f) \defeq [e\le f] =
    \begin{cases}
      1&\text{if \(e\le f\),}\\0&\text{otherwise,}
    \end{cases}
  \end{equation}
  is a character that belongs to~\(U_f\) if and only if \(e \le f\).
\end{proof}

We frequently use the above notation~\([P]\) for a property~\(P\), which is~\(1\) if~\(P\) holds, and~\(0\) otherwise.

\begin{remark}
  \label{rem:Ue_independent}
  If \(U_e\subseteq U_{f_1}\cup \dotsb\cup U_{f_n}\) for idempotents
  \(e,f_1,\ldots,f_n\), then considering the character
  in~\eqref{eq:special_character} it follows that \(e\le f_j\) for
  some~\(j\).  This makes precise in which sense the subsets~\(U_e\)
  for \(e\in E\) are as independent as possible.
\end{remark}

\begin{lemma}
  \label{lem:open_hatE}
  Open subsets of~\(\hat{E}\) correspond bijectively to \emph{ideals} in~\(E\), that is, subsets \(I\subseteq E\) containing~\(0\) and satisfying \(ef\in I\) whenever \(e\in E\) and \(f\in I\) \textup(or equivalently, \(e\in I\) whenever \(f\in I\) and \(e\le f\)\textup).  If~\(A\) is an open subset of~\(\hat{E}\), then the associated ideal is \(I_A \defeq \{e\in E : U_e\subseteq A\}\).  And if~\(I\) is an ideal in~\(E\), the corresponding open subset of~\(\hat{E}\) is
  \[
  A_I \defeq  \bigcup_{e\in I} U_e
  = \{\varphi\in \hat{E} : \varphi(e)=1\text{ for some }e\in I\}.
  \]
\end{lemma}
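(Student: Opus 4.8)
The plan is to produce an explicit inverse to the assignment \(A\mapsto I_A\), thereby establishing the bijection directly rather than counting. Since the basic open sets are the~\(U_e\), the natural candidate for the inverse sends an ideal~\(I\) to the open set \(\bigcup_{e\in I}U_e=\{\varphi\in\hat E:\varphi(e)=1\text{ for some }e\in I\}\), whose complement is exactly the set \(A_I\) of characters vanishing on all of~\(I\). So I would first check that both assignments are well defined — that \(I_A\) is an ideal and that \(\bigcup_{e\in I}U_e\) is open — and then verify the two round trips. Throughout I rely on the facts recorded just before Remark~\ref{rem:Ue_independent}: \(U_0=\emptyset\), \(U_1=\hat E\), \(U_e\cap U_f=U_{ef}\), and that the~\(U_e\) form a basis.

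That \(I_A\) is an ideal is immediate: \(U_0=\emptyset\subseteq A\) gives \(0\in I_A\), and for \(f\in I_A\), \(e\in E\) we have \(U_{ef}=U_e\cap U_f\subseteq U_f\subseteq A\), so \(ef\in I_A\). Openness of \(\bigcup_{e\in I}U_e\) is clear, being a union of basic open sets. For the first round trip, the basis property gives \(\bigcup\{U_e:U_e\subseteq A\}=A\) for every open~\(A\), which is precisely the identity \(\bigcup_{e\in I_A}U_e=A\). For the second round trip I would show \(I_{\bigcup_{f\in I}U_f}=I\); the inclusion \(\supseteq\) is trivial, since \(e\in I\) forces \(U_e\subseteq\bigcup_{f\in I}U_f\) and hence \(e\in I_{\bigcup_{f\in I}U_f}\).

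The reverse inclusion is the one subtle point, and I expect it to be the main obstacle: from \(U_e\subseteq\bigcup_{f\in I}U_f\) I must deduce \(e\in I\), but this union may be infinite, so Remark~\ref{rem:Ue_independent} does not apply as stated. The remedy is to evaluate the inclusion at the distinguished character~\(\varphi_e\) from~\eqref{eq:special_character}: since \(\varphi_e(e)=1\) we have \(\varphi_e\in U_e\subseteq\bigcup_{f\in I}U_f\), so \(\varphi_e(f)=1\) for some \(f\in I\), which by the definition of~\(\varphi_e\) means \(e\le f\); as \(I\) is downward closed, \(e\in I\). This upgrades the finite statement of Remark~\ref{rem:Ue_independent} to arbitrary unions at no extra cost. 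With both round trips established, \(A\mapsto I_A\) is a bijection from the open subsets of~\(\hat E\) onto the ideals of~\(E\), with the inverse described above, completing the proof.
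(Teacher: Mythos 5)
Your argument is correct, and since the paper disposes of this lemma with ``The proof is straightforward,'' you are supplying precisely the details the authors omit; there is no competing approach in the paper to compare against.  Your route is also the natural one and matches the toolkit the paper uses nearby: the well-definedness checks are immediate from \(U_0=\emptyset\), \(U_e\cap U_f=U_{ef}\), the first round trip is the basis property, and your evaluation at the distinguished character \(\varphi_e\) of \eqref{eq:special_character} is exactly the device behind Remark~\ref{rem:Ue_independent} and the quasi-compactness part of Lemma~\ref{lem:cs_sober}.  You are also right that this evaluation trick upgrades the finite statement of Remark~\ref{rem:Ue_independent} to arbitrary unions.

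Two small points.  First, the evaluation step needs \(e\neq 0\): for \(e=0\), formula \eqref{eq:special_character} yields the constant function~\(1\), which is not a character because it violates \(\varphi(0)=0\) (the paper itself sets the case \(e=0\) aside before invoking \(\varphi_e\) in Lemma~\ref{lem:cs_sober}).  This costs nothing, since \(0\in I\) for every ideal, so the reverse inclusion is trivial at \(e=0\); still, say it.  Second, your quiet decision to take the inverse to be \(I\mapsto\bigcup_{e\in I}U_e\) rather than \(I\mapsto A_I\) deserves to be made loud: as displayed, \(A_I=\bigcap_{e\in I}\bigl(\hat{E}\setminus U_e\bigr)\) is the \emph{closed complement} of \(\bigcup_{e\in I}U_e\), not an open set in general.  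For instance, with \(E=\{0,e,1\}\) and \(I=\{0,e\}\), the set \(A_I\) is the singleton \(\{\varphi_1\}\), which is not open because the only neighbourhood of~\(\varphi_1\) is all of~\(\hat{E}\).  So the displayed formula in the lemma describes the closed subset corresponding to~\(I\), and the bijection you prove, with inverse \(I\mapsto\bigcup_{e\in I}U_e=\{\varphi : \varphi(e)=1\text{ for some }e\in I\}\), is the correct reading of the statement; noting this complementation explicitly rather than in passing would strengthen your write-up.
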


The proof is straightforward.  The ideal corresponding to~\(U_e\) is the \emph{principal} ideal \(I_e \defeq \{f\in E : f\le e\}\) generated by~\(e\).

As a result, the set of ideals in a semilattice is isomorphic to \(\Open(\hat{E})\) and thus a locale (see \cite{Johnstone:Stone_spaces}*{II.1}).  The spectrum of this locale, consisting of its prime principal ideals, turns out to be homeomorphic to~\(\hat{E}\).  We find the above direct definition of~\(\hat{E}\) more convenient.

\begin{lemma}
  \label{lem:delta_map}
  For any topological space~\(X\), there is a natural continuous map
  \begin{equation}
    \label{eq:delta_map}
    \delta\colon X\to \widehat{\Open(X)},\qquad
    \delta(x)(U) \defeq [x\in U].
  \end{equation}
  It is a homeomorphism onto its image if~\(X\) is~\(T_0\).
\end{lemma}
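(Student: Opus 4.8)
The plan is to prove all three assertions by direct computation, with the entire argument resting on the defining identity $\delta(x)(U)=[x\in U]$. First I would check that $\delta(x)$ is genuinely a character on $\Open(X)$ for each $x\in X$: the assignment $U\mapsto[x\in U]$ sends the unit~$X$ to~$1$ and the zero~$\emptyset$ to~$0$, and the equality $[x\in U\cap V]=[x\in U]\cdot[x\in V]$ expresses multiplicativity, so $[x\in\cdot]$ is a unit- and zero-preserving homomorphism $\Open(X)\to\{0,1\}$. For naturality, recall that a continuous map $f\colon X\to Y$ induces a semilattice homomorphism $f^{-1}\colon\Open(Y)\to\Open(X)$, $V\mapsto f^{-1}(V)$, and hence a map $\widehat{\Open(X)}\to\widehat{\Open(Y)}$ by precomposition, $\varphi\mapsto\varphi\circ f^{-1}$, which is continuous because the preimage of a basic open set~$U_V$ is $U_{f^{-1}(V)}$. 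The naturality square then commutes by the computation $\delta_X(x)(f^{-1}(V))=[x\in f^{-1}(V)]=[f(x)\in V]=\delta_Y(f(x))(V)$.

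Continuity of~$\delta$ I would establish by computing preimages of the basic open sets $U_V=\{\varphi:\varphi(V)=1\}$ that generate the topology on $\widehat{\Open(X)}$. Indeed, $\delta^{-1}(U_V)=\{x\in X:[x\in V]=1\}=V$ is open, and since the $U_V$ form a basis this suffices.

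Finally, suppose $X$ is~\(T_0\). Injectivity of~$\delta$ is immediate: if $x\ne y$, the $T_0$ axiom provides an open set~$V$ containing exactly one of the two points, whence $\delta(x)$ and $\delta(y)$ disagree on~$V$. To upgrade the continuous injection $\delta\colon X\to\delta(X)$ to a homeomorphism onto its image, it remains to see that $\delta$ is an open map onto $\delta(X)$, and here the decisive observation is the identity $\delta(V)=U_V\cap\delta(X)$ for every open $V\subseteq X$, valid because $\delta(x)\in U_V$ holds precisely when $[x\in V]=1$, i.e.\ when $x\in V$. Thus $\delta$ carries every open subset of~$X$ to an open subset of the subspace~$\delta(X)$, and a continuous open bijection is a homeomorphism. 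There is no real obstacle in this argument; the only point that requires a moment's thought is this last identity, which is exactly what forces $\delta$ to be open and hence a homeomorphism onto its image.
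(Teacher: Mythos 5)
Your proof is correct and takes essentially the same route as the paper's own: continuity via \(\delta^{-1}(U_V)=V\), openness onto the image via the identity \(\delta(V)=U_V\cap\delta(X)\), and injectivity from the \(T_0\) axiom. You merely make explicit two points the paper dismisses as routine, namely the verification that \(\delta(x)\) is a character and the naturality square for a continuous map \(f\colon X\to Y\).
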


\begin{proof}
  It is routine to check that~\(\delta(x)\) is a character on \(\Open(X)\) for \(x\in X\).  The map~\(\delta\) is continuous because \(\delta^{-1}(U_e) = e\) for all \(e\in \Open(X)\).  Thus \(\delta(e) = \delta(X)\cap U_e\), so that~\(\delta\) is open onto its image.  If~\(X\) is~\(T_0\), then~\(\delta\) is injective.
\end{proof}

Let \(\Semil\) be the category of semilattices (with unit and zero) and let \(\Topop\) be the opposite category of topological spaces.  The map \(E\mapsto\hat{E}\) is a covariant functor \(\Semil\to\Topop\): a semilattice homomorphism \(f\colon E\to E'\) induces a continuous map \(\widehat{E'} \to \widehat{E}\), \(\varphi\mapsto\varphi\circ f\).  In the converse direction, the map \(X\mapsto \Open(X)\) is part of a covariant functor \(\Topop\to\Semil\): a continuous map \(f\colon X'\to X\) induces a semilattice homomorphism \(\Open(f)\colon\Open(X)\to\Open(X')\), \(U\mapsto f^{-1}(U)\).

\begin{lemma}
  \label{lem:character_universal}
  For any topological space~\(X\), there is a natural bijection between continuous maps \(\hat{E}\leftarrow X\) and homomorphisms \(E\to \Open(X)\).  This bijection sends \(f\colon X\to\hat{E}\) to the map \(\check{f}\colon E\to\Open(X)\), \(e\mapsto f^{-1}(U_e)\), and it maps \(g\colon E\to\Open(X)\) to \(\hat{g}\colon X\to\hat{E}\) with \(\hat{g}(x)(e) \defeq [x\in g(e)]\).
\end{lemma}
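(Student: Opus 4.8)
The plan is to verify that the two assignments $f \mapsto \check{f}$ and $g \mapsto \hat{g}$ are well defined, that they are mutually inverse, and that they are natural in~$X$. Throughout I write $e, e'$ for elements of~$E$, to avoid clashing with the continuous map~$f$.

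First, for a continuous map $f\colon X \to \hat{E}$, each $\check{f}(e) = f^{-1}(U_e)$ is open because~$U_e$ is open and~$f$ is continuous, so $\check{f}$ takes values in $\Open(X)$. That $\check{f}$ is a homomorphism follows by applying~$f^{-1}$ to the identities $U_{e e'} = U_e \cap U_{e'}$, $U_1 = \hat{E}$, and $U_0 = \emptyset$ already recorded above.

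Next, for a homomorphism $g\colon E \to \Open(X)$ and fixed $x \in X$, the function $\hat{g}(x)\colon e \mapsto [x \in g(e)]$ is a character: it sends~$0$ to $[x \in g(0)] = 0$ and~$1$ to $[x \in g(1)] = [x \in X] = 1$, and it is multiplicative because $g(e e') = g(e) \cap g(e')$. Here the standing convention that~$g$ preserves unit and zero is exactly what makes $\hat{g}(x)$ a genuine (proper) character rather than merely a multiplicative map. The pivotal computation is
\[
\hat{g}^{-1}(U_e) = \{x \in X : x \in g(e)\} = g(e),
\]
which is open; since the~$U_e$ form a basis for the topology, $\hat{g}$ is continuous.

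This same identity gives $\check{\hat{g}}(e) = \hat{g}^{-1}(U_e) = g(e)$, hence $\check{\hat{g}} = g$; conversely $\widehat{\check{f}}(x)(e) = [x \in f^{-1}(U_e)] = [f(x) \in U_e] = f(x)(e)$, hence $\widehat{\check{f}} = f$, so the two assignments are mutually inverse. For naturality in~$X$, a continuous map $h\colon X' \to X$ satisfies $\check{(f \circ h)}(e) = (f \circ h)^{-1}(U_e) = h^{-1}\bigl(f^{-1}(U_e)\bigr) = \bigl(\Open(h) \circ \check{f}\bigr)(e)$, which is the required compatibility with $\Open(h) = h^{-1}(-)$. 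I expect no real obstacle: every step unravels definitions, the single identity $\hat{g}^{-1}(U_e) = g(e)$ does the bulk of the work (yielding both continuity of~$\hat{g}$ and one half of the inverse relation), and the only point needing genuine care is the unit- and zero-preservation of homomorphisms.
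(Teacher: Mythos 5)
Your proof is correct, but it is not the argument the paper writes down: it is exactly the direct verification that the paper waves off in one sentence (``this is straightforward to verify by hand'') before taking a more structural route. The paper instead reads the claimed bijection as the adjunction \(\Topop(\hat{E},X)\cong\Semil(E,\Open(X))\) between the functors \(\Open\colon\Topop\to\Semil\) and \(\hat{\ }\colon\Semil\to\Topop\), takes as unit and counit the previously constructed natural transformations \(U^E\colon E\to\Open(\hat{E})\) and \(\delta^X\colon X\to\widehat{\Open(X)}\), and concludes by checking the two triangle identities, i.e.\ that \(\Open(\delta^X)\circ U^{\Open(X)}\) and \(\widehat{U^E}\circ\delta^{\hat{E}}\) are identities. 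Your elementary route is fully self-contained and isolates the identity \(\hat{g}^{-1}(U_e)=g(e)\) as the engine of the whole lemma, giving continuity of \(\hat{g}\) and one half of the inversion in a single stroke; you are also right to flag unit- and zero-preservation as the point that makes \(\hat{g}(x)\) a proper character (a filter with \(1\in F\), \(0\notin F\)) rather than a mere multiplicative map. What the paper's packaging buys is, first, naturality in \emph{both} variables at once --- you verify naturality only in \(X\), which matches the literal statement, whereas the adjunction formulation also records naturality in \(E\), which the paper uses when a homomorphism \(S\to T\) induces \(\widehat{E(T)}\to\widehat{E(S)}\) in Section~\ref{sec:functorial_GrdS} --- and, second, a conceptual template that recurs later: the adjunction \(\GrdS\dashv\Bisec\) of Theorem~\ref{the:arrow_out_of_GrdS} is described by a unit \(\OpenG\colon S\to\Bisec\bigl(\GrdS(S)\bigr)\) and a counit built from the same map~\(\delta\). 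The underlying computations are the same in both proofs; yours is the more explicit, the paper's the more reusable.
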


\begin{proof}
  This is straightforward to verify by hand.  Instead of this argument, we reinterpret the assertion: we must check that
  \[
  \Topop(\hat{E},X) \cong \Semil(E,\Open(X))
  \]
  if~\(E\) is a semilattice and~\(X\) a topological space.  Here we write \(\Cat(x,y)\) for the set of arrows \(x\to y\) in a category~\(\Cat\).   This means that the functors \(\Open\) and~\(\hat{\ }\) are adjoint to each other.  The unit and counit of this adjunction are the natural transformations \(U^E\colon E\to \Open(\hat{E})\) and \(\delta^X\colon X\to \widehat{\Open(X)}\) described above.  The adjointness follows because the following composite maps are identities:
  \[
  \Open(X) \xrightarrow{U^{\Open(X)}}
  \Open\Bigl(\widehat{\Open(X)}\Bigr) \xrightarrow{\Open(\delta^X)}
  \Open(X),\qquad
  \hat{E} \xleftarrow{\widehat{U^E}}
  \Open\bigl(\hat{E}\bigr)\sphat\xleftarrow{\delta^{\hat{E}}}
  \hat{E}\qedhere
  \]
\end{proof}

\begin{definition}[see~\cites{Johnstone:Stone_spaces, Pultr:Frames}]
  \label{def:sober}
  A topological space is \emph{sober} if every irreducible closed non-empty subset is the closure of a unique point.  Here a closed subset~\(A\) is \emph{irreducible} if \(A=A_1\cup A_2\) with closed subsets \(A_i\subseteq \hat{E}\) implies \(A_1=A\) or \(A_2=A\).
\end{definition}

Sober spaces are~\(T_0\) but not necessarily Hausdorff.  Hausdorff spaces are sober.  There are spaces which are sober but not~\(T_1\), and \(T_1\)\nb-spaces which are not sober.

\begin{lemma}
  \label{lem:cs_sober}
  The space~\(\hat{E}\) is sober and locally quasi-compact.  Each subset~\(U_e\) for \(e\in E\) is quasi-compact.
\end{lemma}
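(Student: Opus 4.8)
The plan is to handle the three assertions separately, pushing all the genuine content into sobriety.

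\emph{Quasi-compactness of~\(U_e\).} I would prove the stronger fact that \(U_e\) is the \emph{smallest} open neighbourhood of the special character \(\varphi_e\) from~\eqref{eq:special_character}. Indeed \(\varphi_e(e)=[e\le e]=1\), so \(\varphi_e\in U_e\), and for any basic set one has \(\varphi_e\in U_f \iff e\le f \iff U_e\subseteq U_f\). Since the \(U_f\) form a basis (previous lemma), every open \(O\ni\varphi_e\) contains some \(U_f\) through~\(\varphi_e\), whence \(U_e\subseteq U_f\subseteq O\). Given any open cover of~\(U_e\), one member already contains~\(\varphi_e\) and therefore all of~\(U_e\); thus \(U_e\) is quasi-compact (in fact supercompact). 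This is the topological shadow of Remark~\ref{rem:Ue_independent}. Local quasi-compactness is then immediate: the \(U_e\) form a basis of quasi-compact open sets.

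\emph{Preliminaries for sobriety.} I would first note \(\hat E\) is \(T_0\): distinct characters differ on some~\(e\), and \(U_e\) separates them. Next I compute closures: \(\chi\in\cl{\{\psi\}}\) iff every basic neighbourhood~\(U_f\) of~\(\chi\) contains~\(\psi\), i.e.\ iff \(\chi(f)=1\Rightarrow\psi(f)=1\) for all~\(f\); hence \(\cl{\{\psi\}}=\{\chi:\chi\le\psi\}\), and \(\psi\) is the largest element of its own closure. Uniqueness in the sober condition is then exactly the \(T_0\) property.

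\emph{Existence.} Let \(A\subseteq\hat E\) be non-empty, closed and irreducible, and I would use the equivalent pairwise-intersection form of irreducibility: relatively open subsets \(U_e\cap A\) and \(U_f\cap A\) that are both non-empty have non-empty intersection. Define the candidate generic point by \(\psi(e)\defeq[U_e\cap A\neq\emptyset]\); equivalently, \(F_\psi\) is the complement of the ideal \(I_A\defeq\{e:U_e\cap A=\emptyset\}\) attached to the open set \(\hat E\setminus A\) by Lemma~\ref{lem:open_hatE}, so that \(A=\{\chi\in\hat E:\chi(e)=0\text{ for all }e\in I_A\}\). The crux is that \(\psi\) is a character: \(\psi(1)=1\) since \(A\neq\emptyset\), \(\psi(0)=0\) since \(U_0=\emptyset\), upward-closedness follows from \(e\le f\Rightarrow U_e\subseteq U_f\), and multiplicativity \(\psi(ef)=\psi(e)\psi(f)\) is exactly irreducibility via \(U_e\cap U_f=U_{ef}\). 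Finally \(\psi\in A\), because \(\psi(e)=0\) for every \(e\in I_A\); and \(A\subseteq\cl{\{\psi\}}\), because \(\chi\in A\) with \(\chi(e)=1\) forces \(U_e\cap A\neq\emptyset\), hence \(\psi(e)=1\). Since \(A\) is closed and contains \(\psi\) we also have \(\cl{\{\psi\}}\subseteq A\), so \(A=\cl{\{\psi\}}\).

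The main obstacle is this existence half of sobriety: one must guess the correct generic point and then recognise that the single nontrivial filter axiom for~\(\psi\), closure of \(F_\psi\) under meets, is nothing but the pairwise-intersection reformulation of irreducibility of~\(A\). Everything else—quasi-compactness, local quasi-compactness, \(T_0\), and uniqueness—is formal once the basis \(\{U_e\}\) and the special characters \(\varphi_e\) are available.
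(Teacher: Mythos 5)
Your proposal is correct and follows essentially the same route as the paper: the same candidate generic point \(\psi(e)=[U_e\cap A\neq\emptyset]\) for a closed irreducible~\(A\), with multiplicativity extracted from irreducibility via \(U_e\cap U_f=U_{ef}\), and quasi-compactness of~\(U_e\) via the observation that it is the smallest open neighbourhood of the special character~\(\varphi_e\). The only (trivial) point you skip, which the paper flags explicitly, is the case \(e=0\): there \(\varphi_0\) is not a character at all (characters must kill~\(0\)), but \(U_0=\emptyset\) is vacuously quasi-compact, so nothing is lost.
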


\begin{proof}
  We claim that if \(\varphi,\psi\in \hat{E}\) satisfy \(\overline{\{\varphi\}}=\overline{\{\psi\}}\), then \(\varphi=\psi\), that is, \(\hat{E}\) is~\(T_0\).  Indeed, if \(\varphi\neq \psi\), then there is \(e\in E\) with \(\varphi(e)\neq \psi(e)\).  If, say, \(\varphi(e)=1\) and \(\psi(e)=0\), then \(\varphi\notin \overline{\{\psi\}}\) because~\(U_e\) is a neighbourhood of~\(\varphi\) not containing~\(\psi\).

  Let~\(A\) be a non-empty closed irreducible subset of~\(\hat{E}\).  Since~\(A\) is closed, we may write \(\hat{E}\setminus A=\bigcup_{e\in F} U_e\), where \(F=\{e\in E: U_e\cap A=\emptyset\}\).
  Let~\(\varphi\) be the characteristic function of \(E\setminus F\).  We claim that~\(\varphi\) is a character with \(A=\overline{\{\varphi\}}\).
  We have \(\varphi(0)=0\) because \(U_0=\emptyset\) and \(\varphi(1)=1\) because \(U_1=\hat{E}\) and~\(A\) is non-empty.
  If \(e\in F\), then \(ef\in F\) for all \(f\in E\) because \(U_{ef}=U_e\cap U_f\subseteq U_e\).
  In other words, if \(ef\notin F\), then \(e\notin F\) and \(f\notin F\).
  If \(e\notin F\) and \(f\notin F\), then \(A\cap U_e\neq \emptyset\) and \(A\cap U_f\neq \emptyset\).
  Since~\(A\) is irreducible, we have \(A\setminus U_e\cup A\setminus U_f=A\setminus U_{ef}\neq A\), so that \(A\cap U_{ef}\neq \emptyset\), that is, \(ef\notin F\).  Thus~\(\varphi\) is a character.  Given \(\psi\in \hat{E}\), we have \(\psi\notin \overline{\{\varphi\}}\) if and only if there is \(e\in E\) with \(\psi\in U_e\) and \(\varphi\notin U_e\), that is, \(e\in F\).  Thus \(\psi\notin \overline{\{\varphi\}}\) if and only if \(\psi\in \bigcup_{e\in F}U_e=\hat{E}\setminus A\), that is, \(\psi\notin A\).
  Hence \(A=\overline{\{\varphi\}}\).

  Finally, we check that~\(U_e\) is quasi-compact for each \(e\in E\).  This includes the special case \(\hat{E}=U_1\).  The case \(e=0\) is trivial, so that we may assume \(e\neq0\).  The character~\(\varphi_e\) defined in~\eqref{eq:special_character} belongs to~\(U_f\) if and only if \(e\le f\).  Thus no proper open subset of~\(U_e\) contains~\(\varphi_e\).  In an open covering of~\(U_e\), one subset contains~\(\varphi_e\) and is therefore equal to~\(U_e\).  This yields a subcovering with one element.
\end{proof}

The above proof shows that the only neighbourhood of \(\varphi_1\in\hat{E}\) is~\(\hat{E}\).
Thus~\(\hat{E}\) is non-Hausdorff unless \(E=\{0,1\}\) and~\(\hat{E}\) is the one-point space.

\begin{remark}
  \label{rem:sober}
  Mapping an open subset to its complement defines an isomorphism of semilattices from \(\Open(X)\) onto the semilattice of closed subsets with product~\(\cup\).  Since the set of irreducible closed subsets of~\(X\) is defined in terms of closed subsets and~\(\cup\) only, we may recover the underlying space~\(X\) from the semilattice \(\Open(X)\) provided~\(X\) is sober.  We cannot recover~\(X\) from \(\Open(X)\) unless~\(X\) is sober because for any space~\(X\) there is a continuous map \(X\to X'\) to a sober space~\(X'\) that induces an isomorphism \(\Open(X)\cong \Open(X')\) (see~\cite{Johnstone:Stone_spaces}*{II.1}).
\end{remark}

\begin{example}
  Let \(E=\{0,1\}\cup F\), where~\(F\) is either a finite set written as \(F=\{e_1,e_2,\dotsc,e_n\}\) or an infinite (countable) set of the form \(\{e_1,e_2,\dotsc\}\).  We consider two different types of semilattice structures on~\(E\).  In the semilattice~\(E_<\), we assume \(e_1<e_2<e_3<\dotsb\); in the semilattice~\(E_\bot\), we assume that the elements~\(e_i\) are orthogonal, that is, \(e_ie_j=0\) for all \(i\neq j\).

  In both cases, we consider the principal filter \(F_i \defeq \{e\in E : e\ge e_i\}\) generated by~\(e_i\) and let~\(\varphi_i\defeq 1_{F_i} \in\hat{E}\) be its characteristic function.  In~\(E_<\), we have \(F_i=\{e_i,e_{i+1},\dotsc,1\}\), whereas in~\(E_\bot\) we have \(F_i=\{e_i,1\}\).  In addition, \(F_\infty\defeq \{1\}\) is a filter and thus \(\varphi_\infty \defeq 1_{F_\infty}\) is a character.  These are all the characters (and filters) in both cases.  Thus \(\hat{E}=\{\varphi_1,\varphi_2,\dotsc,\varphi_\infty\}\) in both cases.  However, the topologies on \(\widehat{E_<}\) and~\(\widehat{E_\bot}\) are different.  We have
  \[
  \Open(\widehat{E_<})=\{\emptyset=U_0, \{\varphi_1\}=U_{e_1}, \{\varphi_1,\varphi_2\}=U_{e_2},\dotsc,U_{1}=\widehat{E_<}\}.
  \]
  In this case, the basis \(\{U_e : e\in E\}\) is already the whole topology of \(\widehat{E_<}\) and we have a semilattice isomorphism \(E_<\cong\Open(\widehat{E_<})\).  In~\(\widehat{E_\bot}\), however, we have \(U_{e_i}=\{\varphi_i\}\) for all \(i\), and we have to take unions in order to get the whole topology of~\(\widehat{E_\bot}\).  The map \(E_\bot\to \Open(\widehat{E_\bot})\) is therefore not surjective.  Topologise~\(F\) by the discrete topology, then \(\Open(\widehat{E_\bot})\) is isomorphic to \(\Open(F)\cup \{1_\infty\}\), that is, the lattice of all subsets of~\(F\) with an extra unit~\(1_\infty\) added (playing the role of~\(\widehat{E_\bot}\) in \(\Open(\widehat{E_\bot})\)).  Furthermore, \(\widehat{E_\bot}\)~is homeomorphic to the non-Hausdorff compactification \(F\cup\{\infty\}\) of~\(F\) where the only neighbourhood of~\(\infty\) is the whole space.
\end{example}

\subsection{The character space as a terminal action}
\label{sec:character_terminal}

Now we return to an inverse semigroup~\(S\) with unit and zero.
Let \(E=E(S)\) be its idempotent semilattice and \(\hat{E}\)~the character space defined above.  Recall that \(g^* eg \in S\) is idempotent for all \(e\in E\), \(g\in S\).  We define an action of~\(S\) on~\(\hat{E}\) by partial homeomorphisms: let \(g\in S\) act by
\[
c_g\colon U_{g^*g}\to U_{gg^*},\qquad
c_g(\varphi)(e) \defeq \varphi(g^* e g),
\]
with domains \(U_e\subseteq \hat{E}\) for \(e\in E\) as in Definition~\ref{def:Ue}.
The maps \(c_g\) and~\(c_{g^*}\) are both continuous, and they are inverse to each other because
\[
c_g \circ c_{g^*} (\varphi)(e) = \varphi(gg^* e gg^*) = \varphi(gg^*)^2\varphi(e) = \varphi(e)
\]
for \(\varphi\in U_{gg^*}\), \(e\in E\) and, similarly, \(c_{g^*} \circ c_g = \Id_{U_{g^*g}}\).  Furthermore, \(c_{gh} = c_g\circ c_h\), \(c_1=\Id_{\hat{E}}\), and \(c_0=\emptyset\), so that we have an inverse semigroup action.





\begin{theorem}
  \label{the:terminal_action}
  The action of~\(S\) on~\(\hat{E}\) described above is a terminal object in the category of actions of~\(S\) on topological spaces, that is, there is a unique \(S\)\nb-equivariant continuous map \(X\to\hat{E}\) for any topological space~\(X\) with an action of~\(S\).
\end{theorem}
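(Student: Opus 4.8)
The plan is to exploit the adjunction of Lemma~\ref{lem:character_universal}, which reduces the construction of a continuous map \(X\to\hat E\) to a semilattice homomorphism \(E\to\Open(X)\), and to read off this homomorphism from the action itself. The natural candidate is the map sending an idempotent to its domain of definition on~\(X\). Equivariance will then be checked directly, and uniqueness will turn out to be forced by the equivariance condition with essentially no extra work.

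First I would record that an idempotent \(e\in E\) acts on~\(X\) as the identity on an open set: its partial homeomorphism is a homeomorphism between open subsets that is idempotent under composition, hence the identity map of its domain \(D_e\subseteq X\). Because the action preserves the unit and the zero and turns products into compositions of partial homeomorphisms, the assignment \(e\mapsto D_e\) satisfies \(D_1=X\), \(D_0=\emptyset\) and \(D_{ef}=D_e\cap D_f\); that is, \(D\colon E\to\Open(X)\) is a homomorphism of semilattices. By Lemma~\ref{lem:character_universal} it corresponds to a continuous map \(f=\hat D\colon X\to\hat E\) with \(f(x)(e)=[x\in D_e]\), characterised among continuous maps by \(f^{-1}(U_e)=D_e\) for all \(e\in E\).

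The substantive step is to verify that \(f\) is \(S\)\nb-equivariant, and the key is the identity
\[
  D_{s^*es} = \{x\in D_{s^*s} : s\cdot x\in D_e\}\qquad\text{for all } s\in S,\ e\in E.
\]
This follows by factoring the partial homeomorphism of \(s^*es\) as the composite of those of \(s^*\), \(e\) and~\(s\): a point~\(x\) lies in its domain exactly when \(x\in D_{s^*s}\) (the domain of~\(s\)) and \(s\cdot x\in D_e\), the remaining requirement \(e\cdot(s\cdot x)\in D_{ss^*}\) being automatic since \(e\) fixes~\(D_e\) and \(s\) maps into \(D_{ss^*}\). Noting that \(s\cdot x\in D_e\) already presupposes \(x\in D_{s^*s}\), the identity shows both that \(s\cdot x\) is defined if and only if \(c_s(f(x))\) is defined (each amounts to \(x\in D_{s^*s}\), i.e.\ \(f(x)\in U_{s^*s}\)), and that for such~\(x\),
\[
  f(s\cdot x)(e) = [s\cdot x\in D_e] = [x\in D_{s^*es}] = f(x)(s^*es) = c_s\bigl(f(x)\bigr)(e)
\]
for every \(e\in E\); hence \(f(s\cdot x)=c_s(f(x))=s\cdot f(x)\), which is exactly the equivariance condition of Definition~\ref{def:inverse_semigroup_act}.

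Finally I would settle uniqueness, which is immediate from equivariance. If \(f'\colon X\to\hat E\) is any \(S\)\nb-equivariant continuous map, then applying the equivariance condition to an idempotent \(e\in E\) yields: \(e\cdot x\) is defined (that is, \(x\in D_e\)) if and only if \(c_e(f'(x))\) is defined (that is, \(f'(x)\in U_{e^*e}=U_e\)). Thus \(\check{f'}(e)=(f')^{-1}(U_e)=D_e=\check f(e)\) for every~\(e\), so \(\check{f'}=\check f\), and the bijection of Lemma~\ref{lem:character_universal} forces \(f'=f\). I expect the main obstacle to be the equivariance verification, specifically the domain identity \(D_{s^*es}=\{x\in D_{s^*s}:s\cdot x\in D_e\}\); once it is in hand, both the existence of the defining homomorphism and the uniqueness of the equivariant map are comparatively routine.
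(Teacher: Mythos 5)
Your proposal is correct and follows essentially the same route as the paper: both extract the semilattice homomorphism \(e\mapsto X_e\) from the action, invoke Lemma~\ref{lem:character_universal} to obtain the continuous map \(f(x)(e)=[x\in X_e]\), verify equivariance via the identity \(x\in X_{s^*es}\Leftrightarrow s\cdot x\in X_e\) for \(x\in X_{s^*s}\), and derive uniqueness by applying equivariance to idempotents. Your write-up merely spells out two points the paper leaves implicit (the factorisation \(\theta_{s^*es}=\theta_{s^*}\circ\theta_e\circ\theta_s\) behind the domain identity, and routing uniqueness through the bijection of Lemma~\ref{lem:character_universal} rather than arguing pointwise), which is a matter of detail, not of method.
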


\begin{proof}
  An action~\(\theta\) of~\(S\) on~\(X\) involves a semilattice map \(E\to\Open(X)\), mapping \(e\in E\) to the domain \(X_e\subseteq X\) of~\(\theta_e\).  By Lemma~\ref{lem:character_universal}, this induces a continuous map \(f\colon X\to\hat{E}\) given by \(f(x)(e)=[x\in X_e]\).  Notice that \(s\cdot x\) is defined if and only if \(x\in X_{s^*s}\) if and only if \(s\cdot f(x)\) is defined.  If \(x\in X_{s^*s}\), then \(f(s\cdot x)(e)=[s\cdot x\in X_e]\) and \(s\cdot f(x)(e)=[x\in X_{s^*es}]\), and these are equal because \(x\in X_{s^*es}\) if and only if \(s\cdot x\in X_e\) provided \(x\in X_{s^*s}\).  Hence \(f\colon X\to \hat{E}\) is \(S\)\nb-equivariant.

  Conversely, let \(g\colon X\to \hat{E}\) be any \(S\)\nb-equivariant, continuous map.  Given \(x\in X\) and \(e\in E\), we have \(g(x)(e)=1\) if and only if \(x\in X_e\) because \(x\in X_e\) if and only if \(e\cdot x\) is defined if and only if \(e\cdot g(x)\) is defined, if and only if \(g(x)\in U_e\) if and only if \(g(x)(e)=1\).  Thus \(g=f\).
\end{proof}

\begin{remark}
  The character space~\(\hat{E}\) with a more complicated (totally disconnected) \emph{Hausdorff} topology has already been considered by Paterson~\cite{Paterson:Groupoids}.  In his topology, the domains \(U_e\subseteq\hat{E}\) of the partial action on~\(\hat{E}\) are both open and closed.  The map \(X\to\hat{E}\) constructed above is continuous for Paterson's topology if and only if the domains \(X_e\subseteq X\) of the partial action on~\(X\) are both \emph{closed} and open.  This is a severe restriction, which rules out the inverse semigroup actions that appear in the study of foliations.
\end{remark}

We may also compare actions of \(E\) and~\(\hat{E}\) on \(\Cst\)\nb-algebras.  Let~\(A\) be a \(\Cst\)\nb-algebra.

An action of a semilattice~\(E\) with unit and zero on~\(A\) (in the sense of Definition~\ref{def:inverse_semigroup_act}) is a unit- and zero-preserving homomorphism from~\(E\) to the semilattice of ideals in~\(A\).  The ideal semilattice of~\(A\) is canonically isomorphic to the semilattice of open subsets of~\(\Prim A\), the primitive ideal space of~\(A\).  Thus an action of~\(E\) on~\(A\) is equivalent to an action of~\(E\) on \(\Prim A\).  (This is the point where semilattices are much easier than general inverse semigroups.)

By our previous results, an action of~\(E\) on \(\Prim A\) is equivalent to a continuous map \(\Prim A\to \hat{E}\).  This is exactly the structure that turns~\(A\) into a \(\Cst\)\nb-algebra over~\(\hat{E}\) in the sense of~\cite{Meyer-Nest:Bootstrap}.  And this defines how topological spaces act on \(\Cst\)\nb-algebras.  Summing up:

\begin{proposition}
  \label{pro:semilattice_act_Cstar}
  An action of a semilattice~\(E\) on a \(\Cst\)\nb-algebra is equivalent to an action of the topological space~\(\hat{E}\), that is, to a structure of\/ \(\Cst\)\nb-algebra over~\(\hat{E}\).
\end{proposition}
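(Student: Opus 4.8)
The plan is to prove the proposition by assembling a short chain of natural bijections, each link being either a standard fact about \(\Cst\)\nb-algebras or one of the results already established, and then to observe that the composite identifies semilattice actions with continuous maps \(\Prim A\to\hat E\), which is by definition the datum of a \(\Cst\)\nb-algebra over~\(\hat E\). The whole argument is formal once the first translation is in place, so I would organise it as a sequence of ``\,is the same as\,'' steps and check naturality at the end.

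First I would unwind what an action of the semilattice~\(E\) on a \(\Cst\)\nb-algebra~\(A\) means in the sense of Definition~\ref{def:inverse_semigroup_act}, reading ``partial homeomorphism'' in the \(\Cst\)\nb-algebraic setting as ``isomorphism between ideals.'' Since every element of~\(E\) is idempotent, its image must be an \emph{idempotent} partial automorphism of~\(A\); and an idempotent partial automorphism is exactly the identity map on some ideal \(I\subseteq A\). Thus an action assigns to each \(e\in E\) an ideal \(I_e\subseteq A\), and the homomorphism and unit/zero conditions become \(I_{ef}=I_e\cap I_f\), \(I_1=A\), \(I_0=0\). In other words, an action of~\(E\) on~\(A\) is precisely a unit- and zero-preserving homomorphism from~\(E\) to the semilattice of ideals in~\(A\) with product~\(\cap\). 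This is the only step that needs a genuine argument; everything after it is bookkeeping.

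Next I would invoke the standard correspondence of \(\Cst\)\nb-algebra theory: the semilattice of ideals in~\(A\) is canonically isomorphic to \(\Open(\Prim A)\), the semilattice of open subsets of the primitive ideal space, via \(I\mapsto\{P\in\Prim A : I\not\subseteq P\}\). Composing with the previous identification, an action of~\(E\) on~\(A\) is the same thing as a semilattice homomorphism \(E\to\Open(\Prim A)\). Finally I would apply Lemma~\ref{lem:character_universal} with \(X=\Prim A\): homomorphisms \(E\to\Open(\Prim A)\) correspond naturally and bijectively to continuous maps \(\Prim A\to\hat E\). Since a continuous map \(\Prim A\to\hat E\) is exactly what it means to endow~\(A\) with the structure of a \(\Cst\)\nb-algebra over~\(\hat E\) in the sense of~\cite{Meyer-Nest:Bootstrap}, the composite chain yields the asserted equivalence, and because each link is natural in~\(A\) the resulting equivalence is natural as well.

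The main obstacle, such as it is, lies entirely in the first step: pinning down the notion of an inverse semigroup action on a \(\Cst\)\nb-algebra so that it really reduces to a homomorphism into the ideal lattice. Once one records that idempotents must act by the identity on ideals and that \(I\mapsto I_e\) must respect meets, units, and zero, the reduction is immediate. The remaining two steps are the canonical ideal/open-set correspondence for \(\Prim A\) and a direct citation of Lemma~\ref{lem:character_universal}, so no further calculation is required.
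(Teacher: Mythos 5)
Your proposal is correct and follows the paper's argument exactly: the paper likewise identifies an action of \(E\) on~\(A\) with a unit- and zero-preserving homomorphism from~\(E\) to the ideal semilattice of~\(A\), uses the canonical isomorphism of that semilattice with \(\Open(\Prim A)\), and then applies Lemma~\ref{lem:character_universal} to obtain a continuous map \(\Prim A\to\hat{E}\), which is by definition a \(\Cst\)\nb-algebra over~\(\hat{E}\) in the sense of \cite{Meyer-Nest:Bootstrap}. Your only addition is to spell out why idempotent partial automorphisms of~\(A\) are exactly identity maps on ideals, a point the paper absorbs into its statement of what an \(E\)\nb-action on a \(\Cst\)\nb-algebra means.
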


\section{The universal groupoid associated with an inverse semigroup}
\label{sec:universal_groupoid}

Given an action of an inverse semigroup~\(S\) on a topological space~\(X\), we get an associated étale topological groupoid of germs as in~\cites{Exel:Inverse_combinatorial,Paterson:Groupoids} (this differs from the construction of the germ groupoid in~\cite{Renault:Cartan.Subalgebras}).  We recall this construction for the action of~\(S\) on~\(\hat{E}\).  We denote the resulting groupoid by~\(\GrdS(S)\).

Its object space \(\GrdS(S)^{(0)}\) is~\(\hat{E}\).  Its arrows are equivalence classes of pairs \((s,\varphi)\) with \(s\in S\), \(\varphi\in U_{s^*s}\), where we identify \((s,\varphi)\) and \((t,\psi)\) if \(\varphi=\psi\) and there is \(e\in E(S)\) with \(s\cdot e= t\cdot e\) and \(\varphi\in U_e\) (that is, \(\varphi(e)=1\)).  We write \(\germ s\varphi\) for the equivalence class of \((s,\varphi)\).  The source and range maps of \(\GrdS(S)\) are defined by
\[
\s(\germ s\varphi)=\varphi\quad\text{and}\quad
\r(\germ s\varphi)=s\cdot \varphi,
\]
and the composition is \(\germ s\varphi\cdot \germ t\psi = \germ{s\cdot t}{\psi}\) if \(\varphi=t\cdot \psi\).  The inversion is \(\germ s\varphi^{-1}=\germ{s^*}{s\cdot\varphi}\), and the unit arrow at~\(\varphi\) is \(\germ 1\varphi\).

The topology on the arrow space~\(\GrdS(S)^{(1)}\) is the smallest one in which the subsets
\[
\OpenG(s,U)\defeq \{\germ s\varphi : \varphi\in U\}
\qquad\text{for \(s\in S\) and \(U\in \Open(U_{s^*s})\)}
\]
are open.  We abbreviate \(\OpenG_s \defeq \OpenG(s,U_{s^*s})\).

\begin{definition}
  \label{def:bisection}
  A \emph{bisection} of a topological groupoid~\(\Grd\) is an open subset of~\(\Grd^{(1)}\) on which the range and source maps are both injective and open.

  A groupoid is \emph{étale} if its arrow space is covered by bisections.
\end{definition}

We restrict to open bisections because we never use more general bijections.

\begin{lemma}
  \label{lem:OpenG_s_basis}
  Each~\(\OpenG_s\) is a bisection of\/~\(\GrdS(S)\), and these subsets for \(s\in S\) form a basis for the topology on~\(\GrdS(S)^{(1)}\). In particular, \/~\(\GrdS(S)\) is an étale groupoid.
\end{lemma}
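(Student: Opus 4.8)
The plan is to prove the two assertions in turn — that each $\OpenG_s$ is a bisection, and that $\{\OpenG_s:s\in S\}$ is a basis — and then to read off étaleness. The technical engine throughout is the existential description of the germ relation: $\germ s\varphi=\germ t\psi$ precisely when $\varphi=\psi$ and $se=te$ for some idempotent $e$ with $\varphi(e)=1$.

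First I would study the source map on a single $\OpenG_s$. Restricted to $\OpenG_s$, the map $\s\colon\germ s\varphi\mapsto\varphi$ is a bijection onto $U_{s^*s}$ with inverse $\sigma\colon\varphi\mapsto\germ s\varphi$, and it is injective since the second coordinate determines the germ inside $\OpenG_s$. It is continuous on all of $\GrdS(S)^{(1)}$ because $\s^{-1}(U_e)=\bigcup_{t\in S}\OpenG(t,U_e\cap U_{t^*t})$ is open. The crucial point is that $\sigma$ is continuous too: for a subbasic set $\OpenG(t,V)$ one has $\germ s\varphi\in\OpenG(t,V)$ if and only if $\varphi\in V$ and $\germ s\varphi=\germ t\varphi$, and the ``agreement locus'' $\{\varphi:\germ s\varphi=\germ t\varphi\}$ equals $U_{s^*s}\cap U_{t^*t}\cap\bigcup_{e\,:\,se=te}U_e$, which is open. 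Hence $\s\colon\OpenG_s\to U_{s^*s}$ is a homeomorphism onto the open set $U_{s^*s}$, so $\s$ is injective and open on $\OpenG_s$. For the range map I would just observe that $\r|_{\OpenG_s}=c_s\circ\s|_{\OpenG_s}$, where $c_s\colon U_{s^*s}\to U_{ss^*}$ is the partial homeomorphism defining the action of~$S$ on~$\hat E$; as a composite of homeomorphisms it is itself a homeomorphism onto~$U_{ss^*}$, hence injective and open. This shows each $\OpenG_s$ is a bisection.

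Next, for the basis property I would record the algebraic identity underlying everything: if $e\le s^*s$, then $(se)^*(se)=e$ and $\germ{se}{\varphi}=\germ s\varphi$ for every $\varphi\in U_e$, so that $\OpenG_{se}=\OpenG(s,U_e)\subseteq\OpenG_s$. Since the $U_e$ form a basis of~$\hat E$, every $U\in\Open(U_{s^*s})$ is a union of such $U_e$ with $e\le s^*s$, whence every subbasic set $\OpenG(s,U)=\bigcup\OpenG_{se}$ is a union of members of our family; consequently $\{\OpenG_s\}$ and $\{\OpenG(s,U)\}$ generate the same topology. It then remains to check the intersection condition. Given $\germ r\chi\in\OpenG_s\cap\OpenG_t$, comparing sources forces both representatives to have second coordinate~$\chi$ and $\germ s\chi=\germ t\chi$, so there is $e\in E$ with $se=te$ and $\chi(e)=1$; replacing $e$ by $e\,s^*s\,t^*t$ I may assume $e\le s^*s$ and $e\le t^*t$ while preserving $se=te$ and $\chi(e)=1$. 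Then $\OpenG_{se}=\OpenG_{te}$, and the identity above gives $\germ r\chi=\germ{se}{\chi}\in\OpenG_{se}\subseteq\OpenG_s\cap\OpenG_t$, as required. Finally, since every germ $\germ s\varphi$ lies in $\OpenG_s$, the bisections $\OpenG_s$ cover $\GrdS(S)^{(1)}$, so $\GrdS(S)$ is étale by Definition~\ref{def:bisection}.

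I expect the main obstacle to be the continuity of the inverse source map~$\sigma$ — equivalently, the openness of the locus where two germs $\germ s{\cdot}$ and $\germ t{\cdot}$ agree. This is exactly where the existential form of the germ equivalence (agreement after multiplication by \emph{some} idempotent) is essential, and it is also what makes the basis argument work; the accompanying idempotent computations ($e\le s^*s\Rightarrow(se)^*(se)=e$ and the resulting germ identifications) are routine but must be carried out with care.
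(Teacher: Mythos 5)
Your proof is correct and follows essentially the same route as the paper's: germ equality supplies an idempotent \(e\) with \(se=te\), which you shrink (via \(e\,s^*s\,t^*t\), where the paper instead picks a basic \(U_f\subseteq U\cap V\cap U_e\)) to produce \(\OpenG_{se}\subseteq\OpenG_s\cap\OpenG_t\), while the bisection property comes, as in the paper, from the source and range maps restricting to bijections of \(\OpenG_s\) onto \(U_{s^*s}\) and \(U_{ss^*}\). Your explicit check that the agreement locus \(\{\varphi : \germ s\varphi=\germ t\varphi\}\) equals the open set \(U_{s^*s}\cap U_{t^*t}\cap\bigcup_{se=te}U_e\) (hence that \(\varphi\mapsto\germ s\varphi\) is continuous) and the factorisation \(\r|_{\OpenG_s}=c_s\circ\s|_{\OpenG_s}\) only supply details that the paper's terser openness claim leaves implicit.
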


\begin{proof}
  Since \([s,\varphi]=[t,\psi]\) implies \(\varphi=\psi\) and \(s\cdot\varphi=t\cdot\psi\), the source and range maps restrict to bijections on~\(\OpenG_s\), which map \(\OpenG(s,U)\) onto \(U\) and~\(s\cdot U\), respectively.  Hence they are open and injective on~\(\OpenG_s\), so that~\(\OpenG_s\) is a bisection.  Since these bisections cover~\(\GrdS(S)^{(1)}\), \(\GrdS(S)\) is étale.

  We claim that if \(\OpenG(s,U)\cap \OpenG(t,V)\) contains~\(\germ s\varphi\), then it contains a neighbourhood of~\(\germ s\varphi\) of the form~\(\OpenG_u\) for some \(u\in S\).  First there is \(e\in E\) with \(se=te\) and \(\varphi\in U_e\) because \(\germ s\varphi=\germ t\varphi\).  Let \(W \defeq U\cap V\cap U_e\), this is a neighbourhood of~\(\varphi\) in~\(\hat{E}\).  It contains~\(U_f\) for some~\(f\in E\) with \(f\le e\) because \(U_f\subseteq U_e\).  Then \(sf=tf\), and \(\OpenG_{sf}\subseteq \OpenG(s,U)\cap \OpenG(t,V)\).  It follows that if \(\germ s\varphi\in \bigcap_{i=1}^n \OpenG(s_i,U_i)\), then there is \(u\in S\) with \(\germ s\varphi\in \OpenG_u \subseteq \bigcap_{i=1}^n \OpenG(s_i,U_i)\).  Thus the subsets~\(\OpenG_u\) form a basis for the topology on \(\GrdS(S)\).
\end{proof}

\begin{lemma}
  \label{lem:GrdS_sober_compact}
  The arrow space of \(\GrdS(S)\) is locally quasi-compact and sober.
\end{lemma}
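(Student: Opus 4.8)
The plan is to transport both properties from the object space~\(\hat{E}\) to the arrow space along the open cover by the bisections~\(\OpenG_s\).  By Lemma~\ref{lem:OpenG_s_basis} the source map restricts to a homeomorphism \(\s\colon\OpenG_s\xrightarrow{\sim}U_{s^*s}\), and the subsets~\(\OpenG_s\) form a basis for the topology of \(\GrdS(S)^{(1)}\).  Now \(U_{s^*s}\) is quasi-compact by Lemma~\ref{lem:cs_sober}, and it is sober as an open subspace of the sober space~\(\hat{E}\) (open subspaces of sober spaces are sober); hence each~\(\OpenG_s\) is a quasi-compact, sober, open subset of \(\GrdS(S)^{(1)}\).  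It therefore suffices to show that these two properties pass from such an open cover to the whole space.

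Local quasi-compactness is immediate: given an arrow~\(\gamma\) and an open neighbourhood~\(W\), the basis property furnishes some~\(\OpenG_u\) with \(\gamma\in\OpenG_u\subseteq W\), and \(\OpenG_u\) is quasi-compact.  Thus every point admits a neighbourhood basis of quasi-compact open sets.

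For sobriety I first note that \(\GrdS(S)^{(1)}\) is~\(T_0\), since it is covered by the open \(T_0\) subspaces~\(\OpenG_s\): two distinct arrows either lie in a common chart~\(\OpenG_s\), where they are separated because \(U_{s^*s}\) is~\(T_0\), or else some chart contains exactly one of them.  This yields the uniqueness clause of Definition~\ref{def:sober}.  For existence, let~\(A\) be a non-empty irreducible closed subset and choose~\(s\) with \(\OpenG_s\cap A\neq\emptyset\).  Then \(\OpenG_s\cap A\) is a non-empty irreducible closed subset of the sober space~\(\OpenG_s\), so \(\OpenG_s\cap A=\cl{\{\gamma\}}\cap\OpenG_s\) for a unique \(\gamma\in\OpenG_s\cap A\), where the closure is taken in~\(\OpenG_s\).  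I claim that \(A=\cl{\{\gamma\}}\), with the closure now taken in~\(\GrdS(S)^{(1)}\).

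The inclusion \(\cl{\{\gamma\}}\subseteq A\) is clear because~\(A\) is closed and contains~\(\gamma\).  The reverse inclusion is the crux, and the step I expect to be the main obstacle, as it is here that irreducibility of~\(A\) must be used to pass from the single chart~\(\OpenG_s\) to all of~\(A\).  Let \(a\in A\) and let~\(W\) be any open neighbourhood of~\(a\).  Both \(W\cap A\) and \(\OpenG_s\cap A\) are non-empty relatively open subsets of the irreducible set~\(A\), hence they meet; choose \(b\in W\cap\OpenG_s\cap A\).  Since \(b\in\OpenG_s\cap A=\cl{\{\gamma\}}\cap\OpenG_s\) and \(W\cap\OpenG_s\) is an open neighbourhood of~\(b\) within~\(\OpenG_s\), it must contain~\(\gamma\), so \(\gamma\in W\).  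As~\(W\) was arbitrary, \(a\in\cl{\{\gamma\}}\); thus \(A\subseteq\cl{\{\gamma\}}\) and the two sets coincide.
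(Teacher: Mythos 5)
Your proof is correct and takes essentially the same route as the paper: both transfer local quasi-compactness and sobriety from the unit space \(\hat{E}\) (Lemma~\ref{lem:cs_sober}) to the arrow space along the étale structure. The paper merely cites the general facts that open subspaces of sober spaces are sober and that the domain of a local homeomorphism into a sober space is sober, whereas you prove the needed gluing arguments explicitly via the basis of bisections \(\OpenG_s\cong U_{s^*s}\) --- the details you supply (in particular the irreducibility argument showing the generic point of \(A\cap\OpenG_s\) is generic for all of \(A\)) are exactly a proof of the general fact the paper invokes.
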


\begin{proof}
  In general, the arrow space of an étale groupoid~\(\Grd\) is sober or locally quasi-compact if and only if its unit space~\(\Grd^{(0)}\) is sober or locally quasi-compact.

  This is clear for local quasi-compactness.  We argue for sobriety.  On the one hand, \(\Grd^{(0)}\)~is open in~\(\Grd^{(0)}\), and open (or more generally, locally closed) subspaces of sober spaces are again sober.  On the other hand, if \(f\colon X\to Y\) is a local homeomorphism of topological spaces, then~\(X\) is sober provided~\(Y\) is.

  Finally, since~\(\hat{E}\), the object space of~\(\GrdS(S)\), is locally quasi-compact and sober by Lemma~\ref{lem:cs_sober}, so is its arrow space.
\end{proof}

\begin{definition}
  \label{def:Bisec}
  Given an étale topological groupoid~\(\Grd\), let \(\Bisec(\Grd)\) be the \emph{inverse semigroup of bisections} of~\(\Grd\), with multiplication \(s\cdot t = \{g\cdot h : g\in s,\ h\in t\}\) and pseudoinverse \(s^*=\{g^{-1} : g\in S\}\).
\end{definition}

\begin{lemma}
  \label{lem:S_inverse_semigroup_of_bisections_of_G(S)}
  The map \(S\to \Bisec\big(\GrdS(S)\big)\), \(s\mapsto \OpenG_s\), is an injective homomorphism.
\end{lemma}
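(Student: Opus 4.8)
The plan is to verify that $s\mapsto\OpenG_s$ respects multiplication, pseudoinverse, unit and zero, and then to establish injectivity by testing on the separating characters $\varphi_e$ from~\eqref{eq:special_character}. Well-definedness is immediate, since each $\OpenG_s$ is a bisection by Lemma~\ref{lem:OpenG_s_basis}, and the unit and zero are handled at once: $\OpenG_1=\{\germ1\varphi:\varphi\in U_1\}=\GrdS(S)^{(0)}$ is the unit of $\Bisec(\GrdS(S))$, and $\OpenG_0=\emptyset$ is its zero, because $U_1=\hat E$ and $U_0=\emptyset$.

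To compute the product $\OpenG_s\cdot\OpenG_t$, I would use that two germs $\germ s\varphi$ and $\germ t\psi$ are composable exactly when $\s(\germ s\varphi)=\r(\germ t\psi)$, that is, $\varphi=t\cdot\psi$, in which case their product is $\germ{st}{\psi}$. This identifies
\[
\OpenG_s\cdot\OpenG_t
= \bigl\{\germ{st}{\psi} : \psi\in U_{t^*t},\ t\cdot\psi\in U_{s^*s}\bigr\}.
\]
The second condition reads $(t\cdot\psi)(s^*s)=\psi(t^*s^*st)=1$, i.e.\ $\psi\in U_{t^*s^*st}$; and the identity $tt^*t=t$ shows $t^*s^*st\le t^*t$, so $U_{t^*s^*st}\subseteq U_{t^*t}$ and the first condition is subsumed. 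Since $(st)^*(st)=t^*s^*st$, the product is exactly $\OpenG_{st}$. This algebraic bookkeeping with the idempotent inequality $t^*s^*st\le t^*t$ is where I expect the main (if modest) obstacle to lie.

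For the pseudoinverse I would use $\germ s\varphi^{-1}=\germ{s^*}{s\cdot\varphi}$: as $\varphi$ ranges over $U_{s^*s}$, the homeomorphism $c_s$ carries it bijectively onto $U_{ss^*}=U_{(s^*)^*(s^*)}$, whence $(\OpenG_s)^*=\OpenG_{s^*}$. Together with the previous paragraph this shows that $s\mapsto\OpenG_s$ is a homomorphism of inverse semigroups preserving unit and zero.

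For injectivity, suppose $\OpenG_s=\OpenG_t$. Applying the source map first gives $U_{s^*s}=\s(\OpenG_s)=\s(\OpenG_t)=U_{t^*t}$, hence $s^*s=t^*t$ by injectivity of $e\mapsto U_e$. I would then test on the character $\varphi_{s^*s}\in U_{s^*s}=U_{t^*t}$ from~\eqref{eq:special_character}: since $\germ s{\varphi_{s^*s}}\in\OpenG_t$, the defining relation for germs supplies $e\in E$ with $\varphi_{s^*s}(e)=1$ and $se=te$. Now $\varphi_{s^*s}(e)=1$ means $s^*s\le e$, so $(s^*s)e=s^*s$ and therefore $se=s(s^*s)e=s(s^*s)=s$; likewise $t^*t=s^*s\le e$ gives $te=t$. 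Thus $s=se=te=t$, which proves injectivity.
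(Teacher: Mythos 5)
Your proof is correct and follows essentially the same route as the paper: the homomorphism property is the direct computation $\OpenG_s\cdot\OpenG_t=\OpenG_{st}$ (which the paper leaves as ``easy to see'' and you spell out via $(t\cdot\psi)(s^*s)=\psi(t^*s^*st)$ and $t^*s^*st\le t^*t$), and injectivity is tested on exactly the same special character $\varphi_{s^*s}$ from~\eqref{eq:special_character}, with the germ witness $e\ge s^*s$ yielding $s=se=te=t$. The only inessential difference is your explicit verification that $(\OpenG_s)^*=\OpenG_{s^*}$, which is redundant since any multiplicative map between inverse semigroups automatically preserves pseudoinverses.
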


\begin{proof}
  It is easy to see that \(\OpenG_s\cdot\OpenG_t=\OpenG_{st}\), \(\OpenG_0=\emptyset\) and \(\OpenG_1=\GrdS(S)^{(0)} = \hat{E}\), so that the map \(s\mapsto \OpenG_s\) is a homomorphism of inverse semigroups with unit and zero.

  To prove injectivity, assume \(\OpenG_s=\OpenG_t\).  Then \(U_{s^*s}=U_{t^*t}\) and hence \(s^*s=t^*t\).  Let \(e\defeq s^*s=t^*t\) and define \(\varphi_e(f) \defeq [f\ge e]\) as in Equation~\eqref{eq:special_character}.  Then \(\varphi_e\in U_e\), so that \(\germ s{\varphi_e}=\germ t{\varphi_e}\) because \(\OpenG_s=\OpenG_t\).  Therefore, there is \(f\in E\) with \(\varphi\in U_f\) (that is, \(f\ge e\)) and \(sf=tf\).  Now \(f\ge e\) implies \(s=sf=tf=t\).
\end{proof}

\begin{definition}
  \label{def:groupoid_act}
  Let~\(\Grd\) be a (possibly non-Hausdorff) topological groupoid, with arrow space~\(\Grd^{(1)}\), object space~\(\Grd^{(0)}\) and range and source maps \(\r,\s\colon \Grd^{(1)}\rightrightarrows \Grd^{(0)}\).

  An \emph{action} of~\(\Grd\) on a (possibly non-Hausdorff) topological space~\(X\) is a pair \((\varrho,\mu)\), where~\(\varrho\) is a continuous map \(X\to \Grd^{(0)}\), called \emph{anchor map}, and~\(\mu\) is a continuous map \(\mu\colon \Grd^{(1)}\times_{\s,\varrho} X\defeq\{(g,x)\in \Grd^{(1)}\times X: \s(g)=\varrho(x)\} \to X\), written \((g,x)\mapsto g\cdot x\), such that \(\varrho(g\cdot x) = \r(g)\) for all \(g\in \Grd^{(1)}\), \(x\in X\) with \(\s(g)=\varrho(x)\), \(g_1\cdot (g_2\cdot x) = (g_1\cdot g_2)\cdot x\) if both sides are defined, and \(1_{\varrho(x)}\cdot x=x\) for all \(x\in X\).

  A map \(f\colon X\to Y\) between two spaces \(X\) and~\(Y\) with such actions of~\(\Grd\) is \emph{\(\Grd\)\nb-equivariant} if \(\varrho_Y\circ f=\varrho_X\) and \(f(g\cdot x) = g\cdot f(x)\) for all \(g\in \Grd^{(1)}\), \(x\in X\) with \(\s(g)=\varrho_X(x)\).  That is, \(f(g\cdot x)\) is defined if and only if \(g\cdot f(x)\) is defined, and both are equal if defined, for all \(g\in \Grd^{(1)}\), \(x\in X\).

  With \(\Grd\)\nb-equivariant maps, the actions of~\(\Grd\) form a category, denoted \(\Top^\Grd\).
\end{definition}

\begin{theorem}
  \label{the:act_spaces_equivalent}
  The categories \(\Top^S\) and \(\Top^{\GrdS(S)}\) of actions of~\(S\) and \(\GrdS(S)\) on topological spaces are isomorphic.
\end{theorem}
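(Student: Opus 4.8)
The plan is to build two functors between $\Top^S$ and $\Top^{\GrdS(S)}$ that are mutually inverse and that leave the underlying space \emph{and} the underlying continuous maps untouched, so that an $S$\nb-action and a $\GrdS(S)$\nb-action on a fixed space are simply two repackagings of the same data. In one direction, starting from an action~$\theta$ of~$S$ on~$X$ with domains $X_e\subseteq X$ of~$\theta_e$, I would take as anchor map the unique $S$\nb-equivariant map $\varrho\colon X\to\hat{E}$, $\varrho(x)(e)=[x\in X_e]$, supplied by Theorem~\ref{the:terminal_action}, and define the groupoid action by $\germ{s}{\varphi}\cdot x\defeq s\cdot x$ whenever $\s(\germ{s}{\varphi})=\varphi=\varrho(x)$; note that the condition $\varphi\in U_{s^*s}$ then reads $x\in X_{s^*s}$, so $s\cdot x$ is defined. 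In the other direction, from a $\GrdS(S)$\nb-action $(\varrho,\mu)$ I would set $X_e\defeq\varrho^{-1}(U_e)$ and define $\theta_s$ on $X_{s^*s}=\varrho^{-1}(U_{s^*s})$ by $\theta_s(x)\defeq\germ{s}{\varrho(x)}\cdot x$.

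The first thing to check is that $\germ{s}{\varphi}\cdot x\defeq s\cdot x$ is well defined: if $\germ{s}{\varphi}=\germ{t}{\varphi}$, there is $e\in E$ with $se=te$ and $\varphi(e)=1$, hence $x\in X_e$; since an idempotent partial homeomorphism is the identity on an open set, $e\cdot x=x$, so $s\cdot x=(se)\cdot x=(te)\cdot x=t\cdot x$. The groupoid axioms then follow from the inverse semigroup axioms: the anchor condition $\varrho(\germ{s}{\varphi}\cdot x)=\r(\germ{s}{\varphi})$ is precisely $S$\nb-equivariance of~$\varrho$, i.e.\ $\varrho(s\cdot x)=c_s(\varphi)$; associativity $g_1\cdot(g_2\cdot x)=(g_1g_2)\cdot x$ comes from associativity of the $S$\nb-action and the germ composition rule $\germ{s}{t\cdot\psi}\cdot\germ{t}{\psi}=\germ{st}{\psi}$; and $1_{\varrho(x)}\cdot x=\germ{1}{\varrho(x)}\cdot x=1\cdot x=x$. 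Symmetrically, in the reverse direction $\theta$ is a unit- and zero-preserving homomorphism: $\theta_1=\Id_X$ and $\theta_0=\emptyset$ are immediate, and $\theta_s\circ\theta_t=\theta_{st}$ follows from groupoid associativity and the same germ composition rule, once one matches the domains using $c_t(\varphi)(s^*s)=\varphi(t^*s^*st)$ to identify $\{x\in X_{t^*t}:\theta_t(x)\in X_{s^*s}\}$ with $X_{(st)^*(st)}$.

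Next I would verify that the two constructions are mutually inverse on objects. Starting from~$\theta$, the recovered domains are $\varrho^{-1}(U_e)=\{x:[x\in X_e]=1\}=X_e$ and the recovered maps are $\germ{s}{\varrho(x)}\cdot x=s\cdot x=\theta_s(x)$; starting from $(\varrho,\mu)$, the recovered anchor has $\varrho'(x)(e)=[x\in\varrho^{-1}(U_e)]=\varrho(x)(e)$, so $\varrho'=\varrho$, and the recovered multiplication sends $(\germ{s}{\varphi},x)$ with $\varphi=\varrho(x)$ to $s\cdot x=\germ{s}{\varphi}\cdot x$, agreeing with~$\mu$ because every arrow lies in some~$\OpenG_s$. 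Since both functors fix the underlying map $f\colon X\to Y$, it only remains to note that $f$ is $S$\nb-equivariant if and only if it is $\GrdS(S)$\nb-equivariant: the condition $\varrho_Y\circ f=\varrho_X$ is exactly $f(x)\in Y_e\Leftrightarrow x\in X_e$, and $f(\germ{s}{\varphi}\cdot x)=\germ{s}{\varphi}\cdot f(x)$ is exactly $f(s\cdot x)=s\cdot f(x)$. This makes the two functors mutually inverse isomorphisms of categories.

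The hard part will be continuity, which is the only place where the topologies, and specifically the étale structure, enter. For the forward functor I must show $\mu\colon\GrdS(S)^{(1)}\times_{\s,\varrho}X\to X$, $(\germ{s}{\varphi},x)\mapsto s\cdot x$, is continuous; since the bisections~$\OpenG_s$ form a basis by Lemma~\ref{lem:OpenG_s_basis}, the open sets $\OpenG_s\times_{\s,\varrho}X$ cover the fibre product, so it suffices to work on each of them, where the identity $\varphi=\s(\germ{s}{\varphi})=\varrho(x)$ renders the arrow coordinate redundant and $\mu$ restricts to $(\germ{s}{\varphi},x)\mapsto\theta_s(x)$, continuous because $\theta_s$ is a homeomorphism on the open set $X_{s^*s}=\varrho^{-1}(U_{s^*s})$. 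For the backward functor, continuity of each~$\theta_s$ follows by writing $\theta_s=\mu\circ\sigma_s$ with $\sigma_s\colon\varrho^{-1}(U_{s^*s})\to\GrdS(S)^{(1)}\times_{\s,\varrho}X$, $x\mapsto(\germ{s}{\varrho(x)},x)$, which is continuous because $x\mapsto\germ{s}{\varrho(x)}$ is the composite of~$\varrho$ with the inverse of the homeomorphism $\s|_{\OpenG_s}\colon\OpenG_s\to U_{s^*s}$. This use of local sections of the source map over the bisections~$\OpenG_s$ is exactly where étaleness is indispensable.
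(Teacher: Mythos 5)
Your proposal is correct and follows essentially the same route as the paper's proof: the same anchor map \(\varrho(x)(e)=[x\in X_e]\) from Lemma~\ref{lem:character_universal}, the same germ action \(\germ{s}{\varphi}\cdot x\defeq s\cdot x\) with the same well-definedness argument via \(e\cdot x=x\), and the same inverse construction \(\theta_s(x)=g\cdot x\) for the unique \(g\in\OpenG_s\) with \(\s(g)=\varrho(x)\), which the paper phrases through the embedding \(S\to\Bisec\big(\GrdS(S)\big)\) of Lemma~\ref{lem:S_inverse_semigroup_of_bisections_of_G(S)}. You merely write out in full the verifications---groupoid axioms, mutual inverseness on objects and morphisms, and notably the continuity of \(\mu\) and of each \(\theta_s\) using local sections of \(\s\) over the bisections \(\OpenG_s\)---that the paper compresses into ``it is easy to check.''
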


\begin{proof}
  Let~\(X\) be a space with an action of~\(S\).  By Lemma~\ref{lem:character_universal}, we get a continuous \(S\)\nb-equivariant map \(\varrho\colon X\to\hat{E} = \GrdS(S)^{(0)}\) given by \(\varrho(x)(e)=[x\in X_e]\), which we take as our anchor map.  If \(\varrho(x)=\varphi\) and \(g\in\GrdS(S)^{(1)}\) satisfies \(\s(g)=\varphi\), then \(g=\germ s\varphi\) for some \(s\in S\), so that \(\varphi(s^*s)=1\) and hence \(x\in X_{s^*s}\), that is, \(s\cdot x\) is defined.  To define \(\germ s\varphi\cdot x\defeq s\cdot x\), we must check that this does not depend on the choice of the representative~\((s,\varphi)\).  If \(\germ t\varphi= \germ s\varphi\), then there is \(e\in E\) such that \(\varphi(e)=1\) and \(se=te\).  Since \(e\cdot \varphi\) is defined, so is \(e\cdot x\), and \(e\cdot x=x\).  Thus \(s\cdot x=s\cdot (e\cdot x)=(se)\cdot x=(te)\cdot x=t\cdot (e\cdot x)=t\cdot x\).  Thus an action of~\(S\) on~\(X\) yields an action of~\(\GrdS(S)\) on~\(X\).

  Conversely, an action of the étale groupoid~\(\GrdS(S)\) on a topological space~\(X\) induces an action of the inverse semigroup \(\Bisec\big(\GrdS(S)\big)\).  Using the homomorphism \(s\mapsto \OpenG_s\) from Lemma~\ref{lem:S_inverse_semigroup_of_bisections_of_G(S)}, we may turn this into an action of~\(S\).  More precisely, given \(e\in E\), let \(X_e\defeq \varrho^{-1}(U_e)\) and for each \(s\in S\), define \(\theta_s\colon X_{s^*s}\to X_{ss^*}\) by \(\theta_s(x)\defeq g\cdot x\), where~\(g\) is the unique element of the bisection~\(\OpenG_s\) with \(\s(g)=\varrho(x)\).  This defines an inverse semigroup action of~\(S\) on~\(X\).

  It is easy to check that these constructions provide functors \(\Top^S\leftrightarrows\Top^{\GrdS(S)}\) that are inverse to each other.
\end{proof}

Could there be a Hausdorff topological groupoid whose actions on Hausdorff topological spaces are equivalent to actions of~\(S\)?  Unfortunately, the answer is no, already for the simplest conceivable inverse semigroup, the semilattice \(\{0,e,1\}\) with \(e^2=e\).  An action of~\(E\) on a topological space~\(X\) is the same as specifying an open subset \(U_e\subseteq X\), the domain of~\(e\).  Thus an action of~\(E\) on a Hausdorff space is a Hausdorff space with an open subset.  Taking its characteristic function, an open subset is equivalent to a continuous map from~\(X\) to \(\{0,1\}\) with the topology where \(\{1\}\)~is open and \(\{0\}\)~is not open (this so-called Sierpi\'nski space is homeomorphic to~\(\hat{E}\)).  There is, however, no Hausdorff space~\(Y\) such that open subsets in Hausdorff spaces~\(X\) correspond to maps \(X\to Y\).

Using non-Hausdorff spaces becomes more natural when we study actions on \(\Cst\)\nb-algebras because their primitive ideal spaces are usually non-Hausdorff.  In Proposition~\ref{pro:semilattice_act_Cstar} we already observed that actions of a semilattice~\(E\) on \(\Cst\)\nb-algebras are equivalent to actions of the topological space~\(\hat{E}\).  For a general inverse semigroup, we therefore expect actions of~\(S\) on \(\Cst\)\nb-algebras to be equivalent to continuous actions of~\(\GrdS(S)\).  This is indeed the case for an appropriate definition of continuous action for non-Hausdorff groupoids.  We plan to discuss this definition elsewhere.

\section{Functoriality}
\label{sec:functoriality}

We want to establish that \(S\mapsto \GrdS(S)\) and \(\Grd\mapsto \Bisec \Grd\) is an adjoint pair of functors.  For this, we first have to describe the categories of inverse semigroups and groupoids we use.  One of them is fairly obvious:

\begin{definition}
  \label{def:invse}
  Let \(\Invse\) be the category of inverse semigroups with zero and unit, with homomorphisms (preserving zero and unit) as arrows.
\end{definition}

\subsection{The category of étale topological groupoids and functoriality of bisections}
\label{sec:Grpds}

The most obvious choice for a category of étale topological groupoids uses continuous functors as morphisms.  However, for this category neither \(\GrdS\) nor~\(\Bisec\) are functorial.  The correct arrows for the category of groupoids are the following:

\begin{definition}[\cites{Buneci:Groupoid_categories, Buneci-Stachura:Morphisms_groupoids}]
  \label{def:groupoid_category}
  Let \(\Grd\) and~\(\GrdH\) be topological groupoids.  An \emph{algebraic morphism} from~\(\Grd\) to~\(\GrdH\), denoted \(\Grd\Zak \GrdH\), is an action of~\(\Grd\) on~\(\GrdH^{(1)}\) that commutes with the action of~\(\GrdH\) on~\(\GrdH^{(1)}\) by right translations.
\end{definition}

Buneci and Stachura trace this definition back to Zakrzewski~\cite{Zakrzewski:Quantum_classical_I}.  They study algebraic morphisms because groupoid \(\Cst\)\nb-algebras are functorial for algebraic morphisms, but not for functors.

The following more concrete description of algebraic morphisms is already proved in \cite{Buneci:Groupoid_categories}*{Lemmas 2.8 and~2.9}.

\begin{lemma}
  \label{lem:algebraic_morphism_as_functor}
  An algebraic morphism \(\Grd\Zak\GrdH\) is equivalent to a pair consisting of an action of~\(\Grd\) on the object space~\(\GrdH^{(0)}\) of~\(\GrdH\) and a functor from the transformation groupoid \(\Grd\ltimes \GrdH^{(0)}\) to~\(\GrdH\) that acts identically on objects.
\end{lemma}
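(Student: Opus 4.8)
The plan is to unpack Definition~\ref{def:groupoid_category}: an algebraic morphism is a left action of~\(\Grd\) on~\(\GrdH^{(1)}\) commuting with right translations, and I want to show it is completely determined by—and freely built from—its restriction to the unit arrows of~\(\GrdH\). Write the left action as \((g,k)\mapsto g\cdot k\) with anchor \(\alpha\colon\GrdH^{(1)}\to\Grd^{(0)}\), and write the right translation action as \((k,h)\mapsto kh\), which has anchor \(\s\colon\GrdH^{(1)}\to\GrdH^{(0)}\). The commuting condition says precisely that \(\alpha(kh)=\alpha(k)\) and \(\s(g\cdot k)=\s(k)\), and that \(g\cdot(kh)=(g\cdot k)h\) whenever the terms are defined. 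Since \(\alpha(kh)=\alpha(k)\) and \(k=1_{\r(k)}k\), the anchor factors as \(\alpha=\beta\circ\r\) with \(\beta\colon\GrdH^{(0)}\to\Grd^{(0)}\), \(\beta(u)\defeq\alpha(1_u)\). Viewing \(k=1_{\r(k)}k\) as the right translate of the unit~\(1_{\r(k)}\) by~\(k\) and applying the commuting law yields the key reduction
\[
g\cdot k=(g\cdot 1_{\r(k)})\,k
\qquad\text{whenever } \s(g)=\beta(\r(k)),
\]
so the entire action is recovered from the arrows \(g\cdot 1_u\).

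From an algebraic morphism I would then extract the two pieces of data. On~\(\GrdH^{(0)}\) I define a \(\Grd\)\nb-action with anchor~\(\beta\) and \(g\cdot u\defeq\r(g\cdot 1_u)\); the action axioms follow from those of the action on~\(\GrdH^{(1)}\) together with the \(\Grd\)\nb-invariance of~\(\s\). Forming the transformation groupoid \(\Grd\ltimes\GrdH^{(0)}\) (object space~\(\GrdH^{(0)}\), arrows \((g,u)\) with \(\s(g)=\beta(u)\), source~\(u\), range \(g\cdot u\)), I define \(\Phi\colon\Grd\ltimes\GrdH^{(0)}\to\GrdH\) to be the identity on objects and \(\Phi(g,u)\defeq g\cdot 1_u\) on arrows. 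The source and range of \(\Phi(g,u)\) are~\(u\) and \(g\cdot u\) by invariance of~\(\s\) and the definition of \(g\cdot u\); the unit axiom gives \(\Phi(1_{\beta(u)},u)=1_u\); and the displayed formula, applied with \(k=g_2\cdot 1_u\), gives \(\Phi(g_1g_2,u)=(g_1g_2)\cdot 1_u=(g_1\cdot 1_{g_2\cdot u})(g_2\cdot 1_u)=\Phi(g_1,g_2\cdot u)\Phi(g_2,u)\), so~\(\Phi\) is a functor.

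Conversely, given a \(\Grd\)\nb-action on~\(\GrdH^{(0)}\) with anchor~\(\beta\) and a functor \(\Phi\colon\Grd\ltimes\GrdH^{(0)}\to\GrdH\) that is the identity on objects, I reconstruct a left \(\Grd\)\nb-action on~\(\GrdH^{(1)}\) by \(\alpha\defeq\beta\circ\r\) and \(g\cdot k\defeq\Phi(g,\r(k))\,k\); this product is defined because \(\Phi\) preserves objects, so \(\s(\Phi(g,\r(k)))=\r(k)\). The action axioms translate directly into functoriality of~\(\Phi\): the unit axiom uses that \(\Phi\) preserves units, associativity uses \(\Phi(g_1,g_2\cdot u)\Phi(g_2,u)=\Phi(g_1g_2,u)\), and commutation with right translations is immediate from \(\r(kh)=\r(k)\); continuity of \(g\cdot k\) follows from that of~\(\Phi\),~\(\r\) and the multiplication of~\(\GrdH\). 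Finally I would verify that the two constructions are mutually inverse: the displayed formula gives \(\Phi(g,\r(k))k=(g\cdot 1_{\r(k)})k=g\cdot k\) in one direction, while \(g\cdot 1_u=\Phi(g,u)1_u=\Phi(g,u)\) and \(\r(g\cdot 1_u)=g\cdot u\) recover both halves of the pair in the other.

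I expect the main obstacle to be the key reduction \(g\cdot k=(g\cdot 1_{\r(k)})k\): everything else is bookkeeping, but this identity—which is exactly where the commuting-with-right-translations hypothesis is used in an essential way—is what allows an action defined on the whole arrow space to be replaced by data supported on units, that is, by a functor on the transformation groupoid.
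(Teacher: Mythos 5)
Your proposal is correct and follows essentially the same route as the paper's proof: you factor the anchor through the range map, define the object action by \(g\cdot u=\r(g\cdot 1_u)\), build the functor via \(\Phi(g,u)=g\cdot 1_u\) (the paper's \(\mu(g,x)\defeq g\cdot 1_x\)), and reverse the construction by \(g\cdot k\defeq\Phi(g,\r(k))\,k\), with the key identity \(g\cdot k=(g\cdot 1_{\r(k)})\,k\) coming from commutation with right translations exactly as in the paper. The only difference is one of detail: the paper compresses the converse into ``the above reasoning may be reversed,'' whereas you verify the axioms and the mutual inverseness explicitly.
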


Recall that the groupoid \(\Grd\ltimes \GrdH^{(0)}\) has object space~\(\GrdH^{(0)}\), arrow space
\[
\Grd^{(1)}\times_{\s,\varrho} \GrdH^{(0)}=\{(g,x)\in \Grd^{(1)}\times \GrdH^{(0)} : \s(g)=\varrho(x)\},
\]
and composition \((g,h\cdot x)\cdot (h,x) \defeq (g\cdot h,x)\).

\begin{proof}
  First we explain how an algebraic morphism \(\Grd\Zak\GrdH\) induces a continuous action of~\(\Grd\) on the object space~\(\GrdH^{(0)}\).  We have \(\varrho(h)=\varrho(hh^{-1})=\varrho(1_{\r(h)})\), so that the anchor map \(\GrdH^{(1)}\to\Grd^{(0)}\) of the algebraic morphism is the composition of the range map \(\r\colon \GrdH^{(1)}\to\GrdH^{(0)}\) and a continuous map \(\varrho\colon \GrdH^{(0)}\to\Grd^{(0)}\), the restriction of the anchor map to units.  The action of~\(\Grd\) on~\(\GrdH^{(0)}\) has anchor map~\(\varrho\) and is defined uniquely by \(g\cdot \r(h)\defeq \r(g\cdot h)\) for all \(g\in\Grd^{(1)}\), \(h\in\GrdH^{(1)}\).  This action is continuous because \(g\cdot x = \r(g\cdot 1_x)\) for all \(g\in \Grd\) and \(x\in \GrdH^{(0)}\).  Since the action of~\(\Grd\) on~\(\GrdH^{(1)}\) commutes with right translations, given \(g\in\Grd^{(1)}\) and \(x\in\GrdH^{(0)}\) with \(\varrho(x)=\s(g)\), there is a unique \(\mu(g,x)\in\GrdH^{(1)}\) such that \(\s(\mu(g,x)) = x\) and \(g\cdot h = \mu(g,x)\cdot h\) for all \(h\in\GrdH^{(1)}\) with \(\r(h)=x\), namely, \(\mu(g,x) \defeq g\cdot 1_x\).  The map~\(\mu\) defines a continuous functor \(\Grd\ltimes \GrdH^{(0)} \to \GrdH\).  The above reasoning may be reversed, showing that \(\varrho\) and~\(\mu\) as above provide an algebraic morphism \(\Grd\Zak\GrdH\) by \(g\cdot h\defeq \mu\bigl(g,\r(h)\bigr)\cdot h\).
\end{proof}

\begin{example}
  \label{exa:alg_morphism_space}
  If \(\Grd\) and~\(\GrdH\) are just spaces (all arrows are identities), then an algebraic morphism \(\Grd\Zak\GrdH\) is the same as a continuous map \(\GrdH^{(0)}\to\Grd^{(0)}\).  More generally, if~\(\GrdH\) is a space and~\(\Grd\) arbitrary, then an algebraic morphism \(\Grd\Zak\GrdH\) is the same as a continuous action of~\(\Grd\) on the space \(\GrdH^{(0)}=\GrdH^{(1)}\).
\end{example}

\begin{example}
  \label{exa:alg_morphism_group}
  If \(\Grd\) and~\(\GrdH\) are both topological groups, then an algebraic morphism \(\Grd\Zak\GrdH\) is the same as a continuous group homomorphism \(\Grd\to\GrdH\).
\end{example}

\begin{example}
    The canonical left action of~\(\Grd\) on~\(\Grd^{(1)}\) is a morphism \(\Grd\Zak\Grd\), the \emph{identity} morphism.
\end{example}

\begin{remark}
  Algebraic morphism must be distinguished from Hilsum--Skandalis morphisms, also called Morita morphisms.  These are given by a topological space~\(X\) with a left \(\Grd\)- and a right \(\GrdH\)\nb-action, such that
  \begin{enumerate}
  \item the actions of \(\Grd\) and~\(\GrdH\) commute;
  \item the right \(\GrdH\)\nb-action is free and proper;
  \item the anchor map \(X\to\Grd^{(0)}\) for the left action induces a homeomorphism \(X/\GrdH \cong \Grd^{(0)}\).
  \end{enumerate}
  Given an algebraic morphism, we may let~\(\GrdH\) act on \(X\defeq\GrdH^{(1)}\) by right translations and~\(\Grd\) as specified on the left.  These two actions commute by assumption; the right \(\GrdH\)\nb-action on~\(X\) is free and proper with \(X/\GrdH \cong \GrdH^{(0)}\) via the range map.  The induced map \(X/\GrdH \to \Grd^{(0)}\) is the anchor map of the action of~\(\Grd\) on~\(\GrdH^{(0)}\) in Lemma~\ref{lem:algebraic_morphism_as_functor}.  Thus an algebraic morphism is a Hilsum--Skandalis morphism as well if and only if the induced map \(\GrdH^{(0)}\to\Grd^{(0)}\) is a homeomorphism.  Equivalently, it comes from a continuous functor that acts by a homeomorphism on objects.
\end{remark}

Recall that \(Y\times_\Grd X\) for a right \(\Grd\)\nb-space~\(Y\) with anchor map~\(\varrho_Y\) and a left \(\Grd\)\nb-space~\(X\) with anchor map~\(\varrho_X\) is the quotient of \(Y\times_{\varrho_X,\varrho_Y} X\) by the equivalence relation
\[
(y\cdot g,x)\sim (y,g\cdot x)
\]
for all \(y\in Y\), \(g\in\Grd^{(1)}\), \(x\in X\) with \(\varrho_Y(y) = \s(g)\) and \(\varrho_X(x) = \r(g)\).  The action map \((g,x)\mapsto g\cdot x\) provides a natural homeomorphism \(\Grd\times_\Grd X\cong X\) for any left \(\Grd\)\nb-space~\(X\), where we let~\(\Grd\) act on itself by right translations.

This allows us to compose algebraic morphisms: given left \(\Grd_j\)\nb-actions on~\(\Grd_{j+1}\) commuting with the right \(\Grd_{j+1}\)\nb-action by translations for \(j=1,2\), we may use the homeomorphism \(\Grd_2 \times_{\Grd_2} \Grd_3 \cong \Grd_3\) to transform the induced left \(\Grd_1\)\nb-action on \(\Grd_2 \times_{\Grd_2} \Grd_3\) into one on~\(\Grd_3\), which commutes with the right translation action and hence defines an algebraic morphism \(\Grd_1\Zak\Grd_2\).

\begin{definition}
  \label{def:Grpds_category}
  Let \(\Grpds\) be the category whose objects are the étale topological groupoids and whose morphisms are the algebraic morphisms, with the composition and identities just described.
\end{definition}

Our next goal is to explain how an algebraic morphism \(\Grd\Zak\GrdH\) induces a homomorphism \(\Bisec \Grd \to \Bisec \GrdH\), so that we get a functor \(\Bisec\colon \Grpds\to\Invse\).  We first describe this functoriality of~\(\Bisec\) by hand and then more conceptually.  Notice that a continuous functor \(\Grd\to\GrdH\) does not induce a map on the level of bisections.

Let \(f=(\varrho,\mu)\) describe an algebraic morphism \(\Grd\Zak\GrdH\), where \(\varrho\colon\GrdH^{(1)}\to\Grd^{(0)}\) and~\(\mu\) is the functor \(\Grd\ltimes\GrdH^{(0)}\to\GrdH\) as in Lemma~\ref{lem:algebraic_morphism_as_functor}.  For a bisection \(t\subseteq\Grd^{(1)}\), let
\[
f_*(t) \defeq \{\mu(g,x) : g\in t,\ x\in \GrdH^{(0)},\ \s(g) = \varrho(x)\}
\]
Since \(\s\big(\mu(g,x)\big)= x\) and \(g\in t\) with \(\s(g) = \varrho(x)\) is unique if it exists, the source map is a bijection from \(f_*(t)\) onto \(\varrho^{-1}(\s(t))\).
Since \(f_*(t^{-1}) = f_*(t)^{-1}\), the range map is a bijection from \(f_*(t)\) onto \(\varrho^{-1}(\r(t))\).  Thus \(f_*(t)\) is a bisection in~\(\GrdH\).  We leave it to the reader to check that \(f_*\colon \Bisec \Grd\to\Bisec \GrdH\) is a homomorphism and preserves zero and unit, and that \(f_*\circ g_* = (f\circ g)_*\) for composable algebraic morphisms and \(\Id_*=\Id\).

\begin{definition}
  \label{def:equivariant_partial_homeo}
  Let~\(\Grd\) be an étale topological groupoid and let~\(X\) be a right \(\Grd\)\nb-space.  A partial homeomorphism \(t\colon U\to V\) of~\(X\) is (right) \emph{equivariant} if \(U\) is \(\Grd\)\nb-invariant and \(t(x\cdot g) = t(x)\cdot g\) for all \(x\in U\), \(g\in\Grd^{(1)}\) for which \(x\cdot g\) is defined.
\end{definition}

Equivariant partial homeomorphisms are closed under composition and inversion of partial homeomorphisms, so that they form an inverse semigroup.

\begin{lemma}
  \label{lem:equivariant_partial_homeo}
  The inverse semigroup of equivariant partial homeomorphisms of~\(\Grd^{(1)}\) with right translation action of~\(\Grd\) is isomorphic to \(\Bisec \Grd\).
\end{lemma}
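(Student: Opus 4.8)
The plan is to exhibit an explicit isomorphism
\[
\Phi\colon \Bisec\Grd \longrightarrow \{\text{equivariant partial homeomorphisms of } \Grd^{(1)}\}
\]
in which a bisection acts on~\(\Grd^{(1)}\) by \emph{left} translation. Given a bisection~\(b\), the source map restricts to a homeomorphism from~\(b\) onto the open set~\(\s(b)\), so every arrow~\(x\) with \(\r(x)\in\s(b)\) has a unique \(g\in b\) with \(\s(g)=\r(x)\), depending continuously on~\(x\). I set \(t_b(x)\defeq g\cdot x\) on the open domain \(\r^{-1}(\s(b))\). Then \(t_b\) is a partial homeomorphism with inverse~\(t_{b^{-1}}\), and it is right-equivariant because \(\r(x\cdot h)=\r(x)\), so the selected~\(g\) is unchanged by right translation and \(t_b(x\cdot h)=g(xh)=(gx)h=t_b(x)\cdot h\); the domain \(\r^{-1}(\s(b))\) is visibly \(\Grd\)\nb-invariant. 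Thus \(\Phi(b)\defeq t_b\) is well defined.

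Next I would check that~\(\Phi\) is a homomorphism of inverse semigroups. The computation \(t_b\circ t_c=t_{bc}\) is immediate from the definitions, as are \(t_{\Grd^{(0)}}=\Id\), \(t_\emptyset=\emptyset\) and \(t_{b^{-1}}=t_b^{-1}\), so~\(\Phi\) preserves products, unit, zero and the involution. For injectivity: if \(t_b=t_c\), then their domains coincide, forcing \(\s(b)=\s(c)\); evaluating both maps at a unit~\(1_u\) with \(u\in\s(b)\) returns the unique element of~\(b\) (respectively~\(c\)) with source~\(u\), and since these agree for all such~\(u\), we get \(b=c\).

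The real content is surjectivity. Let \(t\colon U\to V\) be an equivariant partial homeomorphism. Because the right-translation orbit of an arrow~\(x\) is exactly the range fibre \(\r^{-1}(\r(x))\), invariance of~\(U\) means \(U=\r^{-1}(W)\) for the open set \(W\defeq\r(U)\), and the units contained in~\(U\) are precisely \(\{1_u:u\in W\}\). I set \(b\defeq t(\{1_u:u\in W\})\), an open subset of~\(\Grd^{(1)}\). Every \(x\in U\) with \(\r(x)=u\) satisfies \(x=1_u\cdot x\) (the unit~\(1_u\) translated by the arrow~\(x\)), so equivariance gives \(t(x)=t(1_u)\cdot x=t(1_u)\,x\); in particular \(\s(t(1_u))=u\), so~\(\s\) is injective on~\(b\) with \(\s(b)=W\). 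Hence the displayed formula for~\(t(x)\) is exactly \(t_b(x)\), and \(t=t_b\) provided~\(b\) is a bisection.

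The main obstacle is therefore the remaining bisection property, injectivity of~\(\r\) on~\(b\), since the first analysis only controls the source map. I expect to clear it by symmetry: applying the same argument to the equivariant partial homeomorphism~\(t^{-1}\) produces \(b'\defeq t^{-1}(\{1_v:v\in W'\})\) with \(W'=\r(V)\), on which~\(\s\) is injective. A short equivariance computation then identifies~\(b'\) with~\(b^{-1}\): for \(g=t(1_u)\in b\) and \(v=\r(g)\) one has \(g=1_v\cdot g\), hence \(1_v\in V\), and applying~\(t^{-1}\) yields \(1_u=t^{-1}(1_v)\,g\), so \(t^{-1}(1_v)=g^{-1}\in b'\); the reverse inclusion is symmetric. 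Since~\(\s\) is injective on~\(b^{-1}\) and \(\s(g^{-1})=\r(g)\), this is precisely injectivity of~\(\r\) on~\(b\). Openness of~\(\s\) and~\(\r\) on~\(b\) is automatic from étaleness, so \(b\in\Bisec\Grd\) and \(t=\Phi(b)\), establishing surjectivity and completing the isomorphism.
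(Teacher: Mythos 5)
Your proposal is correct and takes essentially the same route as the paper: a bisection \(b\) acts by left translation, and conversely an equivariant \(t\colon U\to V\) is recovered from \(b = t\bigl(\{1_u : u\in \r(U)\}\bigr)\), which is exactly the set \(T = t(\r(U))\) in the paper's proof. Your symmetric argument with \(t^{-1}\) establishing injectivity of \(\r\) on \(b\) merely fills in a verification the paper leaves to the reader.
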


\begin{proof}
  Given a bisection \(T\subseteq\Grd^{(1)}\), we define a partial map~\(t\) on~\(\Grd^{(1)}\) by \(t(h) \defeq g\cdot h\) if there is \(g\in T\) with \(\s(g)=\r(h)\) (\(g\) is unique if it exists).  This defines an equivariant partial homeomorphism of~\(\Grd^{(1)}\). Conversely, if \(t\colon U\to V\) is an equivariant partial homeomorphism of~\(\Grd^{(1)}\), then
  \[
  T\defeq \{g\in\Grd^{(1)}: \text{there is \(h\in U\) with \(g=t(h)h^{-1}=t(\r(h))\)}\}=t(\r(U))
  \]
  is a bisection with \(t(h)=g\cdot h\) whenever \(h\in U\), \(g\in T\) and \(\s(g)=\r(h)\).
\end{proof}

Let~\(X\) be a right \(\GrdH\)\nb-space and let~\(M\) be an \(\GrdH,\Grd\)-bispace.  Then \(X\times_\GrdH M\) is a right \(\Grd\)\nb-space.  An \(\GrdH\)\nb-equivariant partial homeomorphism \(t\colon U\to V\) of~\(X\) induces a \(\Grd\)\nb-equivariant partial homeomorphism of~\(X\times_\GrdH M\) by \(t_*(x,m) \defeq (t(x),m)\), with domain \(U\times_\GrdH M\subseteq X\times_\GrdH M\).  The map \(t\mapsto t_*\) is an inverse semigroup homomorphism preserving zero and unit.  This construction is functorial if we view bispaces~\(M\) as morphisms and compose them by the balanced product, \(M\circ N\defeq M\times_\Grd N\).

In particular, if \(M=\GrdH\) with the right translation action of~\(\GrdH\) and the left action given by an algebraic morphism \(\Grd\Zak\GrdH\), then a \(\Grd\)\nb-equivariant bisection of~\(\Grd\) induces an \(\GrdH\)\nb-equivariant bisection of \(\Grd\times_\Grd \GrdH \cong \GrdH\).  By Lemma~\ref{lem:equivariant_partial_homeo}, this yields a homomorphism \(\Bisec \Grd\to\Bisec\GrdH\).  This is the conceptual explanation of the functoriality of bisections promised above.  It is straightforward to see that this abstract construction is equivalent to the concrete one above.

\subsection{An adjointness relation}
\label{sec:functorial_GrdS}

\begin{theorem}
  \label{the:arrow_out_of_GrdS}
  Let~\(S\) be an inverse semigroup with zero and unit and let~\(\Grd\) be an étale topological groupoid.  Then algebraic morphisms \(\GrdS(S)\Zak \Grd\) correspond bijectively to homomorphisms \(S\to\Bisec \Grd\).
\end{theorem}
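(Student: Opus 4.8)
The plan is to realise the asserted bijection as a chain of natural identifications, rather than by building the two maps by hand and checking that they are mutually inverse. The starting point is the very definition of an algebraic morphism (Definition~\ref{def:groupoid_category}): a morphism \(\GrdS(S)\Zak\Grd\) is \emph{precisely} an action of the groupoid~\(\GrdS(S)\) on the space \(\Grd^{(1)}\) that commutes with the right translation action of~\(\Grd\) on itself. My first move is therefore to forget, momentarily, the commuting condition and apply Theorem~\ref{the:act_spaces_equivalent} to the single space \(X\defeq\Grd^{(1)}\): since the isomorphism \(\Top^{\GrdS(S)}\cong\Top^S\) constructed there leaves underlying spaces unchanged, actions of~\(\GrdS(S)\) on \(\Grd^{(1)}\) correspond bijectively to actions of~\(S\) on \(\Grd^{(1)}\) by partial homeomorphisms \(\theta_s\).

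The next step is to track what the commuting condition becomes under this correspondence. Under the equivalence of Theorem~\ref{the:act_spaces_equivalent}, the germ \(\germ s\varphi\) acts on \(h\in\Grd^{(1)}\) by \(\germ s\varphi\cdot h=\theta_s(h)\) whenever \(\varphi=\varrho(h)\), where \(\varrho\colon\Grd^{(1)}\to\hat{E}\) is the anchor of the \(\GrdS(S)\)-action and \(\theta_s\) has domain \(\varrho^{-1}(U_{s^*s})\). Recalling that the right translation action is defined whenever \(\s(h)=\r(g)\) and satisfies \(\r(h\cdot g)=\r(h)\), commutation of the two actions says exactly that \(\varrho\) is invariant under right translation (so the domains \(\varrho^{-1}(U_{s^*s})\) are \(\Grd\)-invariant) and that \(\theta_s(h\cdot g)=\theta_s(h)\cdot g\) whenever \(h\cdot g\) is defined. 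In other words, the \(S\)-action commutes with right translation if and only if each \(\theta_s\) is a \(\Grd\)-equivariant partial homeomorphism of \(\Grd^{(1)}\) in the sense of Definition~\ref{def:equivariant_partial_homeo}. Hence algebraic morphisms \(\GrdS(S)\Zak\Grd\) are in bijection with unit- and zero-preserving homomorphisms from~\(S\) into the inverse semigroup of \(\Grd\)-equivariant partial homeomorphisms of~\(\Grd^{(1)}\).

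Finally, I invoke Lemma~\ref{lem:equivariant_partial_homeo}, which identifies that inverse semigroup of \(\Grd\)-equivariant partial homeomorphisms of \(\Grd^{(1)}\) with \(\Bisec\Grd\). Composing this isomorphism with the bijection from the previous step turns homomorphisms \(S\to\{\text{equivariant partial homeomorphisms}\}\) into homomorphisms \(S\to\Bisec\Grd\), which is the desired correspondence. Unwinding the chain, the homomorphism attached to a morphism~\(f\) sends \(s\) to the bisection \(f_*(\OpenG_s)\), where \(f_*\colon\Bisec\GrdS(S)\to\Bisec\Grd\) is the functoriality map and \(\OpenG_s\) is the basic bisection of Lemma~\ref{lem:S_inverse_semigroup_of_bisections_of_G(S)}; conversely, a homomorphism~\(\psi\) yields the morphism with \(\germ s\varphi\cdot h=g\cdot h\), where \(g\) is the unique arrow of the bisection \(\psi(s)\) with \(\s(g)=\r(h)\). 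Naturality in both variables is then straightforward to read off from these explicit formulas.

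The main obstacle is the middle step: verifying cleanly that the commuting condition for the pair of actions matches the fibrewise \(\Grd\)-equivariance of the partial homeomorphisms~\(\theta_s\). This is delicate because it mixes the left \(\GrdS(S)\)-action, transported from an \(S\)-action through Theorem~\ref{the:act_spaces_equivalent}, with the right \(\Grd\)-action, and one must check both the right-invariance of the anchor~\(\varrho\) and the equivalence between equivariance of the germ action and equivariance of each~\(\theta_s\). A secondary point is to confirm that the continuity packaged into Theorem~\ref{the:act_spaces_equivalent} and Lemma~\ref{lem:equivariant_partial_homeo} already supplies continuity of the resulting morphism, so that no separate argument is needed; alternatively, one can assemble the morphism directly through Lemma~\ref{lem:algebraic_morphism_as_functor}, defining the functor \(\GrdS(S)\ltimes\Grd^{(0)}\to\Grd\) on an arrow \((\germ s\varphi,x)\) to be the unique element of~\(\psi(s)\) with source~\(x\), whose continuity follows at once from \(\psi(s)\) being a bisection.
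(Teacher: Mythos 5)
Your proposal is correct and takes essentially the same approach as the paper: both reduce the claim to the equivalence of Theorem~\ref{the:act_spaces_equivalent} applied to \(X=\Grd^{(1)}\), with the commuting condition for the two actions translated into \(\Grd\)\nb-equivariance of the partial homeomorphisms \(\theta_s\) in the sense of Definition~\ref{def:equivariant_partial_homeo}, and then conclude via Lemma~\ref{lem:equivariant_partial_homeo}. The only difference is packaging: where the paper simply asserts an ``equivariant analogue with the same proof,'' you verify explicitly that commutation with right translations corresponds to right-invariance of the anchor~\(\varrho\) together with \(\theta_s(h\cdot g)=\theta_s(h)\cdot g\), which is precisely the content the paper leaves implicit.
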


\begin{proof}
  Theorem~\ref{the:act_spaces_equivalent} has an equivariant analogue with the same proof: actions of~\(S\) on a right \(\Grd\)\nb-space by equivariant partial homeomorphisms are equivalent to actions of~\(\GrdS(S)\) that commute with the right \(\Grd\)\nb-action.  In particular, an algebraic morphism \(\GrdS(S)\Zak \Grd\) is equivalent to an action of~\(S\) on~\(\Grd^{(1)}\) by \(\Grd\)\nb-equivariant partial homeomorphisms.  By Lemma~\ref{lem:equivariant_partial_homeo}, this is equivalent to a homomorphism \(S\to\Bisec \Grd\).
\end{proof}

Theorem~\ref{the:arrow_out_of_GrdS} asserts that~\(\GrdS\) is left adjoint to the functor~\(\Bisec\).  In particular, it implies that~\(\GrdS\) is a functor \(\Invse\to\Grpds\).  Let us see this more directly.

An inverse semigroup homomorphism \(f\colon S\to T\) restricts to a homomorphism between the idempotent semilattices, and hence induces a continuous map \(\varrho\colon \widehat{E(T)}\to\widehat{E(S)}\) by Lemma~\ref{lem:character_universal}.  We let~\(S\) act on \(\widehat{E(T)}\) by composing~\(f\) with the canonical action of~\(T\) on \(\widehat{E(T)}\).  The map \(\germ s\varphi \mapsto \germ {f(s)}\varphi\) is a well-defined functor \(\GrdS(S)\ltimes \widehat{E(T)} \to \GrdS(T)\).  This defines an algebraic morphism \(\GrdS(S)\Zak\GrdS(T)\).  The resulting left action is defined by
\[
\germ s{\varrho(\varphi)} \cdot \germ t\varphi \defeq \germ{f(s)\cdot t}{\varphi}
\qquad
\text{for \(s\in S\), \(t\in T\), \(\varphi\in U_{t^*t}\subseteq \widehat{E(T)}\), \(\varrho(\varphi)\in U_{s^*s}\).}
\]

We also describe the adjointness between \(\Bisec\) and~\(\GrdS\) using a unit and counit of adjunction.  These are canonical arrows \(S\to \Bisec\big(\GrdS(S)\big)\) and \(\GrdS \Bisec(\Grd) \Zak \Grd\).

The first is the canonical inverse semigroup homomorphism \(\OpenG\colon S\to \Bisec\big(\GrdS(S)\big)\) in Lemma~\ref{lem:S_inverse_semigroup_of_bisections_of_G(S)}.

For the latter, we first need a map \(\Grd^{(0)} \to (\GrdS \Bisec \Grd)^{(0)}\).  The idempotent bisections of~\(\Grd\) are exactly the open subsets of the object space~\(\Grd^{(0)}\).  Hence the object space of \(\GrdS \Bisec (\Grd)\) is
\[
(\GrdS \Bisec \Grd)^{(0)} = \widehat{E(\Bisec \Grd)} = \widehat{\Open(\Grd^{(0)})}.
\]
The map \(\delta\colon \Grd^{(0)} \to \widehat{\Open(\Grd^{(0)})}\) that we need is defined already in~\eqref{eq:delta_map}.  Bisections of~\(\Grd\) act on~\(\Grd^{(0)}\) by partial homeomorphisms.  This defines an action of~\(\GrdS \Bisec(\Grd)\) on~\(\Grd^{(0)}\) by Theorem~\ref{the:act_spaces_equivalent}.  The map~\(\delta\) is clearly \(\Bisec \Grd\)\nb-equivariant, hence it is \(\GrdS \Bisec(\Grd)\)-equivariant as well.

Now we assume that~\(\Grd^{(0)}\) is~\(T_0\) to simplify the construction.  Then~\(\delta\) is a homeomorphism onto its image.  The topological groupoid \(\GrdS \Bisec(\Grd)\ltimes \Grd^{(0)}\) is isomorphic to the restriction of \(\GrdS \Bisec(\Grd)\) to~\(\Grd^{(0)}\).  This restriction is exactly the original groupoid~\(\Grd\).  This provides an isomorphism \(\GrdS \Bisec(\Grd)\ltimes \Grd^{(0)} \cong \Grd\) and hence an algebraic morphism \(\GrdS \Bisec(\Grd)\Zak \Grd\).  This is the counit of the adjunction in Theorem~\ref{the:arrow_out_of_GrdS}.

\subsection{Algebraic morphisms as functors of action categories.}
\label{sec:morphisms_functors}

If we think of groupoids as generalised spaces, then we would expect a morphism between them to induce a map between the spaces of orbit (isomorphism classes of objects) because this is the classical space described by a groupoid.  Whereas functors between groupoids do this, algebraic morphisms, in general, do not induce a map between the orbit spaces.

If we think of groupoids as symmetries of spaces, then we would expect a morphism to turn an action of one groupoid into an action of the other, in the same way as for group homomorphisms.  Whereas functors do not do this, we are going to show, even more, that algebraic morphisms are exactly the same as functors between the categories of actions that do not change the underlying space.

Let \(\Forget\colon \Top^\Grd\to\Top\) be the functor that forgets the \(\Grd\)\nb-action.

\begin{theorem}
  \label{the:algebraic_morphism_as_functor}
  An algebraic morphism \(\Grd\Zak\GrdH\) induces a functor \(F\colon \Top^\GrdH\to\Top^\Grd\) that satisfies \(\Forget\circ F=\Forget\).  Conversely, any functor with this property comes from an algebraic morphism \(\Grd\Zak\GrdH\) in this way.
\end{theorem}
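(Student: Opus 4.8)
The plan is to establish the two directions separately and then verify that the assignments are mutually inverse.

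\smallskip
\emph{Forward direction.} First I would use Lemma~\ref{lem:algebraic_morphism_as_functor} to present the algebraic morphism \(\Grd\Zak\GrdH\) as a pair \((\varrho,\mu)\), where \(\varrho\colon\GrdH^{(0)}\to\Grd^{(0)}\) is the anchor of a \(\Grd\)\nb-action on \(\GrdH^{(0)}\) and \(\mu\colon\Grd\ltimes\GrdH^{(0)}\to\GrdH\) is a functor acting identically on objects, so that \(\s(\mu(g,x))=x\) and \(\r(\mu(g,x))=g\cdot x\). Given a \(\GrdH\)\nb-space \(X\) with anchor \(\varrho_X\) and action \(h\cdot x\), I define \(F(X)\) to be the \emph{same} underlying space with anchor \(\varrho\circ\varrho_X\) and \(\Grd\)\nb-action \(g\cdot x\defeq\mu(g,\varrho_X(x))\cdot x\), defined exactly when \(\s(g)=\varrho(\varrho_X(x))\). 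The anchor, associativity and unit axioms are immediate from the functoriality of~\(\mu\) and the \(\GrdH\)\nb-action axioms, and continuity follows from continuity of \(\mu\), \(\varrho_X\) and the action. A \(\GrdH\)\nb-equivariant continuous map is unchanged by this construction and is then automatically \(\Grd\)\nb-equivariant, so \(F\) is a functor with \(\Forget\circ F=\Forget\).

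\smallskip
\emph{Converse: extracting the data.} For the converse I would recover \((\varrho,\mu)\) by evaluating \(F\) on two distinguished \(\GrdH\)\nb-spaces. Applying \(F\) to the terminal \(\GrdH\)\nb-space \(\GrdH^{(0)}\) yields a \(\Grd\)\nb-action on \(\GrdH^{(0)}\); let \(\varrho\) be its anchor. Next regard \(M\defeq\GrdH^{(1)}\) as a left \(\GrdH\)\nb-space under left translation (anchor \(\r\)); then \(F\) equips the same space with a \(\Grd\)\nb-action \(g\cdot_M h\), and I set \(\mu(g,x)\defeq g\cdot_M 1_x\) for \(\s(g)=\varrho(x)\). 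The key idea is that every structural map of \(\GrdH^{(1)}\) I use is a \emph{total} \(\GrdH\)\nb-equivariant map, hence is left verbatim by \(F\) and thereby turned into a \(\Grd\)\nb-equivariant one. Thus: the range map \(\r\colon M\to\GrdH^{(0)}\) is the unique equivariant map to the terminal object, so after \(F\) it intertwines the \(\Grd\)\nb-actions, giving \(\varrho_M^{\Grd}=\varrho\circ\r\) and \(\r(\mu(g,x))=g\cdot x\); and the inclusion of the invariant fibre \(\s^{-1}(x)\hookrightarrow M\) is \(\GrdH\)\nb-equivariant, so the \(\Grd\)\nb-action restricts to \(\s^{-1}(x)\), whence \(\mu(g,x)=g\cdot_{\s^{-1}(x)}1_x\in\s^{-1}(x)\) and therefore \(\s(\mu(g,x))=x\). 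Continuity of \(\mu\) is clear from \(\mu(g,x)=g\cdot_M 1_x\) together with continuity of the \(\Grd\)\nb-action on \(M\) and of the unit section \(x\mapsto 1_x\).

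\smallskip
\emph{Converse: \(\mu\) is a functor.} The unit law \(\mu(1_{\varrho(x)},x)=1_x\) is the unit axiom of the \(\Grd\)\nb-action on~\(M\). For multiplicativity I would invoke right translations: for an arrow \(k\) with \(\s(k)=x\), \(\r(k)=x'\), the map \(R_k\colon\s^{-1}(x')\to\s^{-1}(x)\), \(h\mapsto hk\), is a total \(\GrdH\)\nb-equivariant map (left and right translations commute), so \(F(R_k)=R_k\) is \(\Grd\)\nb-equivariant, i.e. \((g\cdot_M h)k=g\cdot_M(hk)\) for \(h\in\s^{-1}(x')\). Taking \(k=\mu(g_2,x)\) (so \(x'=g_2\cdot x\)) and \(h=1_{x'}\), and then using associativity of the \(\Grd\)\nb-action on~\(M\),
\[
\mu(g_1,g_2\cdot x)\,\mu(g_2,x)=g_1\cdot_M(g_2\cdot_M 1_x)=(g_1g_2)\cdot_M 1_x=\mu(g_1g_2,x).
\]
By Lemma~\ref{lem:algebraic_morphism_as_functor}, \((\varrho,\mu)\) is an algebraic morphism \(\Grd\Zak\GrdH\).

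\smallskip
\emph{Mutual inverseness and the main obstacle.} Finally I would check the two passages invert each other. Starting from \((\varrho,\mu)\), forming \(F\) and re-extracting is a direct computation: the formula \(g\cdot x=\mu(g,\varrho_X(x))\cdot x\) returns the original object action on \(X=\GrdH^{(0)}\) and returns \(\mu\) itself on \(X=M\) at \(h=1_x\). The reverse composite is the crux: given \(F\) with reconstructed \((\varrho,\mu)\) and reconstructed functor \(F'\), I must show \(F'(X)=F(X)\) for every~\(X\). For \(x\in X\) with \(y\defeq\varrho_X(x)\), the orbit map \(o_x\colon\s^{-1}(y)\to X\), \(h\mapsto h\cdot x\), is a total \(\GrdH\)\nb-equivariant map with \(o_x(1_y)=x\); after applying \(F\) it is \(\Grd\)\nb-equivariant, so \(g\cdot_X x=o_x(g\cdot_M 1_y)=o_x(\mu(g,y))=\mu(g,\varrho_X(x))\cdot x\), which is exactly the \(F'\)\nb-action, and the same map shows the anchors agree; since both functors leave maps unchanged, \(F=F'\). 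I expect the main obstacle to be precisely this converse: the naive attempt is to read off \(\mu\) and the induced action ``pointwise'' on \(\GrdH^{(1)}\), which seems to require separation hypotheses (such as \(\GrdH^{(0)}\) being \(T_0\)) to control \(\s(g\cdot_M h)\). The device that avoids any such assumption is never to use fibres directly but always through honest \(\GrdH\)\nb-equivariant total maps — fibre inclusions, right translations, and orbit maps — each preserved verbatim by \(F\), so that \(\GrdH\)\nb-equivariance is converted into \(\Grd\)\nb-equivariance for free.
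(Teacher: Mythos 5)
Your proposal is correct and follows essentially the same route as the paper: both directions hinge on applying \(F\) to \(\GrdH^{(1)}\) with the left translation action and exploiting that total \(\GrdH\)\nb-equivariant maps (fibre inclusions \(\s^{-1}(x)\hookrightarrow\GrdH^{(1)}\), right translations \(R_k\), and the orbit/action maps) are preserved verbatim by \(F\), which is exactly how the paper obtains commutation with right translations and shows the reconstructed morphism induces \(F\). Your repackaging through Lemma~\ref{lem:algebraic_morphism_as_functor} as a pair \((\varrho,\mu)\), and your explicit check that the two assignments are mutually inverse, are harmless elaborations of the same argument rather than a different method.
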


\begin{proof}
  An algebraic morphism \(\Grd\Zak\GrdH\) induces a left \(\Grd\)\nb-action on \(\GrdH^{(1)}\times_\GrdH X\) for any \(\GrdH\)\nb-space~\(X\).  The latter is naturally homeomorphic to~\(X\) via the map \((h,x)\mapsto h\cdot x\).  Thus an \(\GrdH\)\nb-action becomes a \(\Grd\)\nb-action on the same space.  Since \(\GrdH\)\nb-equivariant maps are also \(\Grd\)\nb-equivariant, we get a functor \(F\colon \Top^\GrdH\to\Top^\Grd\) with \(\Forget\circ F=\Forget\).

  Conversely, take such a functor~\(F\).  When we apply~\(F\) to the space~\(\GrdH^{(1)}\) with left translation action of~\(\GrdH\), we get a left \(\Grd\)\nb-action on~\(\GrdH^{(1)}\).  We claim that this left action commutes with the right translation action, so that we get an algebraic morphism \(\Grd\Zak\GrdH\), and that the functor~\(F\) acts on any space in the way described above, given by this algebraic morphism.

  For \(x\in\GrdH^{(0)}\), let \(\GrdH_x\defeq \{g\in\GrdH^{(1)}: \s(g)=x\}\).  This is an \(\GrdH\)\nb-invariant subspace for the left translation action.  Since~\(F\) is a functor, \(\GrdH_x\)~is \(\Grd\)\nb-invariant as well, and the induced \(\Grd\)\nb-action on~\(\GrdH_x\) is the restriction of the \(\Grd\)\nb-action on~\(\GrdH^{(1)}\).  An arrow \(h\in\GrdH^{(1)}\) induces an \(\GrdH\)\nb-equivariant map \(\GrdH_{\r(h)} \to \GrdH_{\s(h)}\) by right translation.  Since~\(F\) is a functor, these maps remain \(\Grd\)\nb-equivariant, that is, the left \(\Grd\)\nb-action commutes with the right translation action of~\(\GrdH\).

  For an \(\GrdH\)\nb-space~\(X\) with anchor map~\(\pi\), the action is an \(\GrdH\)\nb-equivariant map \(\GrdH^{(1)} \times_{\s,\pi} X\to X\) if we let~\(\GrdH\) act on~\(\GrdH^{(1)}\) by left translations as above.  Since this map is surjective, the left \(\Grd\)\nb-action on~\(X\) is determined by the action on \(\GrdH^{(1)} \times_{\s,\pi} X\).  For each \(x\in X\), we get an \(\GrdH\)\nb-invariant subspace in \(\GrdH^{(1)} \times_{\s,\pi} X\) consisting of elements of the form \((h,x)\), which is isomorphic to~\(\GrdH^{(1)}_{\pi(x)}\).  Hence the action of~\(\Grd\) on \(\GrdH^{(1)} \times_{\s,\pi} X\) is determined by the left actions on~\(\GrdH^{(1)}_y\) for all~\(y\), which are in turn determined by the action on~\(\GrdH^{(1)}\).  Thus the functor~\(F\) and the functor induced by the algebraic morphism we have just constructed are equal because they agree on~\(\GrdH^{(1)}\).
\end{proof}

\section{Reconstructing groupoids}

Let~\(\Grd\) be an étale topological groupoid and let \(S\subseteq \Bisec \Grd\) be an inverse sub-semigroup.  Is it sometimes possible to recover~\(\Grd\) from~\(S\)?

We begin with the following well-known result (see~\cite{Exel:Inverse_combinatorial} and also \cite{Matsnev-Resende:Etale_Groupoids}):

\begin{proposition}
  \label{pro:covering_subsemigroup_action}
  Let~\(\Grd\) be an étale topological groupoid and let~\(S\) be an inverse sub-semigroup of \(\Bisec \Grd\).  Assume that~\(S\) covers~\(\Grd^{(1)}\), that is, \(\bigcup S = \Grd^{(1)}\), and that
  \begin{equation}
    \label{eq:basis_condition}
    \text{for all \(s,t\in S\) and \(g\in s\cap t\), there is \(r\in S\) with \(g\in r\subseteq s\cap t\).}
  \end{equation}
  Then~\(\Grd\) is isomorphic to the topological groupoid of germs of the action of~\(S\) on~\(\Grd^{(0)}\).
\end{proposition}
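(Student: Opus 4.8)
The plan is to build an explicit isomorphism of topological groupoids. Write $\GrdH$ for the groupoid of germs of the action of~$S$ on~$\Grd^{(0)}$, where a bisection $s\in S\subseteq\Bisec\Grd$ acts by the partial homeomorphism with domain $\s(s)\subseteq\Grd^{(0)}$ that sends a point~$y$ to the range of the unique arrow of~$s$ with source~$y$. Both $\Grd$ and~$\GrdH$ have object space~$\Grd^{(0)}$, and on objects I would let the comparison map be the identity. On arrows I would define $\Phi\colon\Grd^{(1)}\to\GrdH^{(1)}$ by
\[
\Phi(g)\defeq\germ{s}{\s(g)}\qquad\text{for any } s\in S \text{ with } g\in s,
\]
such an~$s$ existing because~$S$ covers~$\Grd^{(1)}$ by hypothesis.

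The first point to settle, and the one I expect to be the real obstacle, is that $\Phi$ is well defined; this is exactly where condition~\eqref{eq:basis_condition} is used. If $g\in s\cap t$ with $s,t\in S$, the hypothesis yields $r\in S$ with $g\in r\subseteq s\cap t$. Since $r,s,t$ are bisections and $r\subseteq s$, $r\subseteq t$, for every $y\in\s(r)$ the unique arrow of~$s$ (respectively~$t$) with source~$y$ coincides with the unique arrow of~$r$ with source~$y$; hence $s$ and~$t$ restrict to the same partial homeomorphism on the open neighbourhood $\s(r)$ of~$\s(g)$. Taking the idempotent $e\defeq r^*r\in S$, which corresponds to the open set~$\s(r)$, this reads $se=te=r$ while $\s(g)\in\s(r)$, so that $\germ{s}{\s(g)}=\germ{t}{\s(g)}$ by the definition of germs. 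Everything after this is bookkeeping.

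Next I would check that $\Phi$ is a groupoid homomorphism that is the identity on objects. It intertwines source and range because $\s(\Phi(g))=\s(g)$ and $\r(\Phi(g))=s\cdot\s(g)=\r(g)$, and it is multiplicative because $g_1\in s_1$ and $g_2\in s_2$ force $g_1g_2\in s_1s_2$, so $\Phi(g_1)\Phi(g_2)=\germ{s_1s_2}{\s(g_2)}=\Phi(g_1g_2)$. Bijectivity is then routine: if $\Phi(g_1)=\Phi(g_2)$, the resulting germ equality over the common source yields an idempotent~$e$ with $g_1,g_2\in s_1e=s_2e$ and the same source, whence $g_1=g_2$ since~$\s$ is injective on a bisection; and each germ $\germ{s}{x}$ is the image of the unique arrow of~$s$ with source~$x$.

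Finally I would verify that $\Phi$ is a homeomorphism. Continuity is immediate from $\Phi^{-1}(\OpenG(s,U))=s\cap\s^{-1}(U)$, which is open since~$s$ is open and~$\s$ continuous; as the sets $\OpenG(s,U)$ with $s\in S$ generate the topology of~$\GrdH$, this gives continuity of~$\Phi$. For openness I would use that~$\Grd$ is étale, so its bisections form a basis: given a bisection~$B$ and $g\in B$, choose $s\in S$ with $g\in s$ and put $U\defeq\s(B\cap s)$, an open subset of~$\Grd^{(0)}$; then $\OpenG(s,U)$ is an open neighbourhood of~$\Phi(g)$ contained in~$\Phi(B)$, because every $x\in U$ is the source of some $h\in B\cap s$ with $\Phi(h)=\germ{s}{x}$. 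Hence $\Phi(B)$ is open for each bisection~$B$, so $\Phi$ carries every open set to an open set. Combining the steps, $\Phi$ is a continuous open bijective homomorphism that is the identity on objects, that is, an isomorphism of topological groupoids $\Grd\cong\GrdH$.
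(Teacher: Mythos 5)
Your proof is correct, but there is nothing in the paper to compare it against line by line: the paper states this proposition without proof, as a well-known result with pointers to \cite{Exel:Inverse_combinatorial} and \cite{Matsnev-Resende:Etale_Groupoids}. Your argument is essentially the standard explicit-isomorphism proof from those sources, and you correctly isolate the only non-routine point, namely well-definedness of \(\Phi\), which is exactly where~\eqref{eq:basis_condition} enters. Your treatment of it is right: with \(g\in r\subseteq s\cap t\) and \(e\defeq r^*r\in E(S)\) (here you implicitly use that \(S\) is closed under \(*\) and products), one has \(se=\{g'\in s : \s(g')\in\s(r)\}=r=te\) by injectivity of \(\s\) on the bisections \(s\) and~\(t\), and \(\s(g)\) lies in the domain \(\s(r)\) of~\(e\), so \(\germ{s}{\s(g)}=\germ{t}{\s(g)}\) by the definition of germs. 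Two micro-remarks on steps you label as immediate. First, the inclusion \(\Phi^{-1}\bigl(\OpenG(s,U)\bigr)\subseteq s\cap\s^{-1}(U)\) needs the same one-line trick as your injectivity step: if \(g\in s'\), \(x\defeq\s(g)\in U\) and \(\germ{s'}{x}=\germ{s}{x}\), choose an idempotent \(e\) with \(s'e=se\) and \(x\) in the domain of~\(e\); then \(g=g\cdot 1_x\in s'e=se\subseteq s\). Second, your openness argument uses that bisections form a basis of \(\Grd^{(1)}\); with the paper's Definition~\ref{def:bisection} (étale means the arrow space is covered by bisections) this holds because any open subset of a bisection is again a bisection, which is worth one sentence. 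Finally, you check compatibility with \(\s\), \(\r\) and multiplication but not with units and inverses; this is harmless, since a bijective multiplicative map intertwining \(\s\) and~\(\r\) preserves them automatically (an idempotent arrow in a groupoid is a unit, so \(\Phi(1_x)=1_x\), and then \(\Phi(g^{-1})=\Phi(g)^{-1}\) follows), but you should say so.
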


That is, we may recover~\(\Grd\) if we know the object space~\(\Grd^{(0)}\) with the action of~\(S\), provided~\(S\) covers~\(\Grd\) and condition~\eqref{eq:basis_condition} holds.  These conditions together are equivalent to \(S\)~forming a sub-basis for some topology on~\(\Grd^{(1)}\) (not necessarily equivalent to the given topology).  In particular, Proposition~\ref{pro:covering_subsemigroup_action} applies if~\(S\) is a basis for the usual topology of~\(\Grd^{(1)}\).  It is shown in~\cite{Matsnev-Resende:Etale_Groupoids} that if~\(S\) is just an inverse sub-semigroup of~\(\Bisec \Grd\) for which \(E(S)\)~covers~\(\Grd^{(0)}\) (meaning that the inclusion map \(S\hookrightarrow \Bisec \Grd\) is a wide representation of~\(S\) in the sense of \cite{Matsnev-Resende:Etale_Groupoids}*{Definition 2.18}), then the germ groupoid construction yields an étale groupoid with unit space homeomorphic to~\(\Grd^{(0)}\).  This groupoid will, however, not be isomorphic to~\(\Grd\) in general if~\(S\) does not cover~\(\Grd^{(1)}\) (for instance take~\(S\) to be \(E(\Bisec\Grd)\cong \Open(\Grd^{(0)})\) so that \(S\)~is a basis for~\(\Grd^{(0)}\) but the associated groupoid of germs is just the space \(X=\Grd^{(0)}\) viewed as a groupoid in the trivial way).

\begin{lemma}
  \label{lem:recover_space_if_basis}
  Let~\(S\) be an inverse semigroup of bisections of~\(\Grd\) that is a basis for~\(\Grd^{(1)}\).  Assume that \(X\defeq \Grd^{(0)}\) is~\(T_0\).  Let~\(E\) be the idempotent part of~\(S\).  The map \(\delta\colon X\to \widehat{\Open(X)} \to \hat{E}\), \(\delta(x)(e) = [x\in e]\), is an \(S\)\nb-equivariant homeomorphism onto its image.  The groupoid~\(\Grd\) is isomorphic to the restriction of \(\GrdS(S)\) to the invariant subspace \(X\subseteq \hat{E} = \GrdS(S)^{(0)}\).
\end{lemma}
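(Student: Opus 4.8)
The plan is to treat the two assertions separately: first that $\delta$ is an $S$\nb-equivariant homeomorphism onto its image and that this image is invariant, and then that $\Grd$ is the restriction of $\GrdS(S)$ to it.

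First I would record the structural fact underlying everything: since $S$ is a basis for $\Grd^{(1)}$ and $\Grd^{(0)}$ is open in $\Grd^{(1)}$ (because $\Grd$ is étale), the idempotents in~$S$ — which are exactly the elements of~$S$ contained in $\Grd^{(0)}$, that is, the members of~$E$ — form a basis for $X=\Grd^{(0)}$. Granting this, $\delta$ is just the composite of the map $\delta^X\colon X\to\widehat{\Open(X)}$ of Lemma~\ref{lem:delta_map} with the character-restriction map $\widehat{\Open(X)}\to\hat{E}$ induced by $E\hookrightarrow\Open(X)$; so $\delta$ is continuous and $\delta(x)$ is a character with $\delta(x)(e)=[x\in e]$. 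Injectivity of~$\delta$ then follows because $X$ is~$T_0$ and the basis~$E$ separates points, and openness onto the image follows from $\delta^{-1}(U_e)=e$, giving $\delta(e)=\delta(X)\cap U_e$; hence $\delta$ is a homeomorphism onto its image.

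Next I would verify equivariance. The partial homeomorphism of~$X$ attached to $s\in S$ has domain the open set $s^*s=\s(s)$ and sends $x=\s(g)$ to $\r(g)$ for the unique $g\in s$ over~$x$, while $c_s$ has domain $U_{s^*s}$, so the domains correspond under~$\delta$. For the values it suffices to establish the bisection identity $s\cdot x\in e \iff x\in s^*es$ for $e\in E$, which I would obtain by computing $s^*es=\{\s(g):g\in s,\ \r(g)\in e\}$ as an open subset of~$\Grd^{(0)}$; then $\delta(s\cdot x)(e)=[s\cdot x\in e]=[x\in s^*es]=\delta(x)(s^*es)=c_s(\delta(x))(e)$. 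Equivariance immediately yields invariance of $\delta(X)$: an arrow of $\GrdS(S)$ with source $\delta(x)$ is some $\germ s{\delta(x)}$ with $\delta(x)\in U_{s^*s}$, and its range $c_s(\delta(x))=\delta(s\cdot x)$ again lies in $\delta(X)$.

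For the groupoid isomorphism I would define $\Phi\colon\Grd\to\GrdS(S)|_{\delta(X)}$ to be~$\delta$ on objects and to send an arrow~$g$, chosen inside some $s\in S$, to $\germ s{\delta(\s(g))}$. The hard part will be well-definedness: if $g\in s\cap t$, the basis condition~\eqref{eq:basis_condition} supplies $r\in S$ with $g\in r\subseteq s\cap t$, and putting $e\defeq r^*r\in E$ one checks $se=r=te$ with $\s(g)\in e$, which is precisely the germ relation forcing $\germ s{\delta(\s(g))}=\germ t{\delta(\s(g))}$. Granting this, compatibility with source, range (via the equivariance just proved), composition, units and inversion is a routine translation between products of bisections and products of germs, and bijectivity is immediate since $\s|_s$, $\delta$ and $\s|_{\OpenG_s}$ are bijective on the relevant pieces. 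Finally, on each bisection~$s$ the map~$\Phi$ factors as $(\s|_{\OpenG_s})^{-1}\circ\delta\circ\s|_s$, a composite of homeomorphisms onto $\OpenG_s\cap\GrdS(S)|_{\delta(X)}^{(1)}$ by Lemma~\ref{lem:OpenG_s_basis}; as these open pieces cover both arrow spaces, $\Phi$ is a homeomorphism. Alternatively, one could shortcut this last paragraph by invoking Proposition~\ref{pro:covering_subsemigroup_action} to identify~$\Grd$ with the germ groupoid of the $S$\nb-action on~$\Grd^{(0)}$ and then transporting that germ groupoid along the equivariant homeomorphism~$\delta$; but either way the genuine difficulty is the same, namely matching germs across the basis condition and across~$\delta$.
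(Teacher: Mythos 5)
Your proof is correct, and it splits into two halves that relate differently to the paper's own argument. For the first half you follow essentially the paper's route, but more carefully: the paper's proof argues injectivity from the \(T_0\) hypothesis and openness onto the image from the matching bases \(\{e : e\in E\}\) and \(\{U_e : e\in E\}\), and it leaves the equivariance and invariance assertions entirely implicit; you supply exactly the facts it silently uses, namely that \(E\) is a basis for \(\Grd^{(0)}\) (because \(\Grd^{(0)}\) is open in \(\Grd^{(1)}\) and the idempotents of \(S\) are precisely its members contained in \(\Grd^{(0)}\) --- the paper only records this equivalence in a remark \emph{after} the lemma), and the identity \(s\cdot x\in e \iff x\in s^*es\), which gives \(\delta(s\cdot x)=c_s(\delta(x))\) and hence invariance of \(\delta(X)\). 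For the second half you genuinely diverge: the paper disposes of the groupoid isomorphism in two sentences, observing that the germ construction is compatible with restriction to invariant subspaces, so that the restriction of \(\GrdS(S)\) to \(\delta(X)\) is the germ groupoid of the \(S\)\nb-action on \(X\), which is \(\Grd\) by Proposition~\ref{pro:covering_subsemigroup_action} (itself quoted as well known, without proof). You instead construct the isomorphism \(\Phi\colon g\mapsto \germ{s}{\delta(\s(g))}\) for \(g\in s\) by hand; your well-definedness step via condition~\eqref{eq:basis_condition}, taking \(e=r^*r\) and checking \(se=r=te\), is precisely the combinatorial core hidden inside Proposition~\ref{pro:covering_subsemigroup_action}, and your local factorization \((\s|_{\OpenG_s})^{-1}\circ\delta\circ\s|_s\) handles the topology correctly since these open pieces cover both arrow spaces. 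What your main route buys is self-containedness --- it does not use Proposition~\ref{pro:covering_subsemigroup_action} or the restriction-compatibility of the germ construction as black boxes, effectively reproving the former in transported form --- at the cost of redoing routine germ bookkeeping; the shortcut you sketch in your final sentence is, word for word, the paper's actual proof.
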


\begin{proof}
  Since~\(E\) is a basis, \(\delta(x)=\delta(y)\) implies that \(x\) and~\(y\) belong to the same open subsets.  Hence \(x=y\) because~\(X\) is~\(T_0\).  Thus~\(\delta\) is injective.  We have \(\delta(x)\in U_e\) if and only if \(e\in E\).  Since the sets \(U_e\) and~\(e\) for \(e\in E\) form bases of~\(X\) and~\(\hat{E}\), the map~\(\delta\) is a homeomorphism onto its image.

  The germ groupoid construction is compatible with restriction to invariant subspaces.  Hence the restriction of \(\GrdS(S)^{(0)}\) to~\(X\) is the groupoid of germs of the action of~\(S\) on~\(X\), which is~\(\Grd\) by Proposition~\ref{pro:covering_subsemigroup_action}.
\end{proof}

It is easy to see that~\(S\) is a basis of~\(\Grd^{(1)}\) if and only if~\(S\) covers~\(\Grd^{(1)}\) and~\(E\) is a basis of~\(\Grd^{(0)}\).  If~\(E\) is not a basis for the topology of~\(\Grd^{(0)}\), then we may equip \(\Grd^{(0)}\) and~\(\Grd^{(1)}\) with the topologies generated by \(E\) and~\(S\), respectively.  This yields a new étale groupoid that may not be distinguished from~\(\Grd\) using our data.  Hence the assumption that~\(E\) is a basis is necessary.

If we do not know the set~\(X\), then we cannot in general recover~\(\Grd\) even if~\(S\) is a basis for~\(\Grd\).  Recall that the open subsets~\(U_e\) for \(e\in E\) form a basis for the topology on \(\GrdS(S)\), so that~\(\Grd\) could be~\(\GrdS(S)\).  But there may be many other étale groupoids~\(\Grd\) for which~\(S\) is a basis of bisections.

The situation improves if we are given the whole inverse semigroup \(\Bisec\Grd\) and know this fact.  The reason for this is that we may recover a sober space from the semilattice \(\Open(X)\) (see Remark~\ref{rem:sober}).

\begin{proposition}
  \label{pro:recover_Grd_from_Bisec}
  Let~\(\Grd\) be a sober étale topological groupoid and let \(S\defeq \Bisec \Grd\).  Let \(E\subseteq S\) be the idempotent part of~\(S\).  Call \(e\in E\) \emph{irreducible} if \(e\neq 1\) and \(e=e_1\cdot e_2\) implies \(e_1=e\) or \(e_2=e\).  For an irreducible \(e\in E\), define \(\varphi_e\colon E\to\{0,1\}\) by \(\varphi_e(f) = 0\) if and only if \(f\le e\).  These maps are characters, and the subset \(X\subseteq\hat{E}\) of all~\(\varphi_e\) with irreducible \(e\in E\) is \(S\)\nb-invariant.  The restriction of \(\GrdS(S)\) to~\(X\) is naturally isomorphic to~\(\Grd\).
\end{proposition}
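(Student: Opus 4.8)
The plan is to reduce everything to Lemma~\ref{lem:recover_space_if_basis} by giving an intrinsic, order-theoretic description of the image of the embedding \(\delta\colon \Grd^{(0)}\to\hat{E}\) of Lemma~\ref{lem:delta_map}; the role of sobriety is precisely to pin down this image as the set of the~\(\varphi_e\).

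First I would identify \(E=E(\Bisec\Grd)\) with the semilattice \(\Open(\Grd^{(0)})\) of open subsets of the object space, since the idempotent bisections of~\(\Grd\) are exactly the open subsets of~\(\Grd^{(0)}\). Writing \(X\defeq\Grd^{(0)}\) and letting \(U_e\subseteq X\) denote the open set corresponding to \(e\in E\), I would then transport the notion of irreducibility across the complementation anti-isomorphism of Remark~\ref{rem:sober}, which sends \(\Open(X)\) with~\(\cap\) to the semilattice of closed subsets with~\(\cup\). Under this anti-isomorphism the product \(e_1\cdot e_2\) (that is, \(U_{e_1}\cap U_{e_2}\)) becomes the union of the complementary closed sets, and the condition defining an irreducible idempotent---\(e\neq1\) and \(e=e_1\cdot e_2\) forces \(e_1=e\) or \(e_2=e\)---becomes exactly the statement that the closed set \(C_e\defeq X\setminus U_e\) is non-empty and irreducible in the sense of Definition~\ref{def:sober}. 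Invoking sobriety of~\(\Grd\), every non-empty irreducible closed subset is \(\cl{\{x\}}\) for a unique \(x\in X\), so irreducible idempotents \(e\in E\) are in bijection with points \(x\in X\) via \(C_e=\cl{\{x\}}\), that is, \(U_e=X\setminus\cl{\{x\}}\).

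Next I would check that, under this bijection, \(\varphi_e\) coincides with the character \(\delta(x)\) of Lemma~\ref{lem:delta_map}. For \(f\in E\) with open set~\(U_f\), the defining relation gives \(\varphi_e(f)=0\) iff \(f\le e\) iff \(U_f\subseteq X\setminus\cl{\{x\}}\) iff \(U_f\cap\cl{\{x\}}=\emptyset\); since \(U_f\) is open, the last condition is equivalent to \(x\notin U_f\), that is, to \(\delta(x)(f)=0\). Hence \(\varphi_e=\delta(x)\). This immediately yields the first two assertions of the proposition: each \(\varphi_e\) is a character because \(\delta(x)\) is, and the subset \(\{\varphi_e : e\text{ irreducible}\}\) equals \(\delta(X)\), which is \(\Bisec\Grd\)\nb-invariant.

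Finally, I would conclude the isomorphism statement by applying Lemma~\ref{lem:recover_space_if_basis} directly, whose hypotheses are met: \(\Grd\) is sober, hence~\(T_0\), and \(\Grd\) is étale, so \(S=\Bisec\Grd\) is a basis of bisections for~\(\Grd^{(1)}\) (open subsets of bisections are bisections, and bisections cover). That lemma gives that \(\delta\) is an \(S\)\nb-equivariant homeomorphism onto \(\delta(X)=\{\varphi_e : e\text{ irreducible}\}\) and that \(\Grd\) is isomorphic to the restriction of \(\GrdS(S)\) to this invariant subspace of~\(\hat{E}\); naturality follows from the naturality of~\(\delta\), this being the counit of the adjunction of Section~\ref{sec:functorial_GrdS}, now seen to be an isomorphism in the sober case. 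The main obstacle I anticipate is the bookkeeping of the operation- and order-reversal in the translation of irreducibility---in particular, ensuring that the semilattice notion of an irreducible idempotent matches the topological notion of an irreducible closed set, so that sobriety applies cleanly; once that correspondence is in place, the identity \(\varphi_e=\delta(x)\) and the appeal to Lemma~\ref{lem:recover_space_if_basis} are routine.
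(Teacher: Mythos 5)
Your proposal is correct and follows essentially the same route as the paper's own proof: identify \(E\) with \(\Open(\Grd^{(0)})\), translate irreducibility of idempotents into irreducibility of the complementary closed sets, use sobriety to identify each irreducible \(e\) with \(\Grd^{(0)}\setminus\cl{\{x\}}\) and check \(\varphi_e=\delta(x)\), so that \(X\) is the image of~\(\delta\), and then conclude via Lemma~\ref{lem:recover_space_if_basis}. Your version merely spells out the order-reversal bookkeeping and the equivalence \(U_f\cap\cl{\{x\}}=\emptyset\Leftrightarrow x\notin U_f\) that the paper leaves implicit.
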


\begin{proof}
  Recall that \(E=\Open(\Grd^{(0)})\).  An element \(e\in E\) is irreducible in the above sense if and only if its complement in~\(\Grd^{(0)}\) is irreducible as a closed subset.  Since~\(\Grd^{(0)}\) is sober, \(e\in E\) is irreducible if and only if \(e=\Grd^{(0)}\setminus \cl{\{x\}}\) for a unique \(x\in X\).  Then \(\varphi_e(f) = [x\in f]\).  Thus~\(X\) is the range of the canonical map \(\delta\colon \Grd^{(0)} \to \hat{E}\).  The assertion now follows from Lemma~\ref{lem:recover_space_if_basis}.
\end{proof}

\begin{bibdiv}
\begin{biblist}
\bib{Buneci:Groupoid_categories}{article}{
  author={Buneci, M\u {a}d\u {a}lina Roxana},
  title={Groupoid categories},
  conference={ title={Perspectives in operator algebras and mathematical physics}, },
  book={ series={Theta Ser. Adv. Math.}, volume={8}, publisher={Theta}, place={Bucharest}, },
  date={2008},
  pages={27--40},
  review={\MRref {2433025}{2010b:22007}},
}

\bib{Buneci:Morphisms_dynamical}{article}{
  author={Buneci, M\u {a}d\u {a}lina Roxana},
  title={Morphisms of discrete dynamical systems},
  journal={Discrete Contin. Dyn. Syst.},
  volume={29},
  date={2011},
  number={1},
  pages={91--107},
  issn={1078-0947},
  review={\MRref {2725283}{2012b:46146}},
  doi={10.3934/dcds.2011.29.91},
}

\bib{Buneci-Stachura:Morphisms_groupoids}{article}{
  author={Buneci, M\u {a}d\u {a}lina Roxana},
  author={Stachura, Piotr},
  title={Morphisms of locally compact groupoids endowed with Haar systems},
  status={eprint},
  date={2005},
  note={\arxiv {0511613}},
}

\bib{Exel:Inverse_combinatorial}{article}{
  author={Exel, Ruy},
  title={Inverse semigroups and combinatorial $C^*$\nobreakdash -algebras},
  journal={Bull. Braz. Math. Soc. (N.S.)},
  volume={39},
  date={2008},
  number={2},
  pages={191--313},
  issn={1678-7544},
  review={\MRref {2419901}{2009b:46115}},
  doi={10.1007/s00574-008-0080-7},
}

\bib{Johnstone:Stone_spaces}{book}{
  author={Johnstone, Peter T.},
  title={Stone spaces},
  series={Cambridge Studies in Advanced Mathematics},
  volume={3},
  publisher={Cambridge University Press},
  place={Cambridge},
  date={1982},
  pages={xxi+370},
  isbn={0-521-23893-5},
  review={\MRref {698074}{85f:54002}},
}

\bib{Matsnev-Resende:Etale_Groupoids}{article}{
  author={Matsnev, Dmitry},
  author={Resende, Pedro},
  title={\'Etale groupoids as germ groupoids and their base extensions},
  journal={Proc. Edinb. Math. Soc. (2)},
  volume={53},
  date={2010},
  number={3},
  pages={765--785},
  issn={0013-0915},
  review={\MRref {2720249}{2011m:22007}},
  doi={10.1017/S001309150800076X},
}

\bib{Meyer-Nest:Bootstrap}{article}{
  author={Meyer, Ralf},
  author={Nest, Ryszard},
  title={\(C^*\)\nobreakdash -Algebras over topological spaces: the bootstrap class},
  journal={M\"unster J. Math.},
  volume={2},
  date={2009},
  pages={215--252},
  issn={1867-5778},
  review={\MRref {2545613}{2011a:46105}},
  eprint={http://nbn-resolving.de/urn:nbn:de:hbz:6-10569452982},
}

\bib{Paterson:Groupoids}{book}{
  author={Paterson, Alan L. T.},
  title={Groupoids, inverse semigroups, and their operator algebras},
  series={Progress in Mathematics},
  volume={170},
  publisher={Birkh\"auser Boston Inc.},
  place={Boston, MA},
  date={1999},
  pages={xvi+274},
  isbn={0-8176-4051-7},
  review={\MRref {1724106}{2001a:22003}},
  doi={10.1007/978-1-4612-1774-9},
}

\bib{Pultr:Frames}{article}{
  author={Pultr, Ale\v {s}},
  title={Frames},
  booktitle={Handbook of algebra, Vol. 3},
  pages={791--857},
  publisher={North-Holland},
  place={Amsterdam},
  date={2003},
  review={\MRref {2035108}{2004j:06009}},
  doi={10.1016/S1570-7954(03)80073-6},
}

\bib{Renault:Cartan.Subalgebras}{article}{
  author={Renault, Jean},
  title={Cartan subalgebras in $C^*$\nobreakdash -algebras},
  journal={Irish Math. Soc. Bull.},
  number={61},
  date={2008},
  pages={29--63},
  issn={0791-5578},
  review={\MRref {2460017}{2009k:46135}},
  eprint={http://www.maths.tcd.ie/pub/ims/bull61/S6101.pdf},
}

\bib{Zakrzewski:Quantum_classical_I}{article}{
  author={Zakrzewski, Stanis\l aw},
  title={Quantum and classical pseudogroups. I. Union pseudogroups and their quantization},
  journal={Comm. Math. Phys.},
  volume={134},
  date={1990},
  number={2},
  pages={347--370},
  issn={0010-3616},
  review={\MRref {1081010}{91m:58012}},
  eprint={http://projecteuclid.org/euclid.cmp/1104201734},
}

\end{biblist}
\end{bibdiv}
\end{document}